\def\blfootnote{\xdef\@thefnmark{}\@footnotetext} 
\newtheorem{theorem}{Theorem}[section]
\newtheorem{lemma}[theorem]{Lemma}
\newtheorem{proposition}[theorem]{Proposition }
\newtheorem{corollary}[theorem]{Corollary}
\theoremstyle{definition}
\newtheorem{definition}[theorem]{Definition}
\newtheorem{example}[theorem]{Example}
\theoremstyle{remark}
\newtheorem{remark}[theorem]{Remark}
\numberwithin{equation}{section}
\newcommand{\R}{{\mathbb R}}
\newcommand{\C}{{\mathbb C}}
\renewcommand{\P}{{\mathbb P}}
\newcommand{\G}{{\mathbb G}}
\newcommand{\clg}{{\mathbb G}}
\newcommand{\kc}{{\mathcal C}}
\newcommand{\kd}{{\mathcal D}}
\newcommand{\kh}{{\mathcal H}}
\newcommand{\km}{{\mathcal M}}
\newcommand{\ko}{{\mathcal O}}
\newcommand{\kt}{\mathcal{T}}
\newcommand{\kz}{{\mathcal Z}}
\newcommand{\fm}{\mathfrak{m}}
\newcommand{\sC}{\mathscr{C}}
\newcommand{\sK}{\mathscr{K}}
\newcommand{\sT}{\mathscr{T}}
\newcommand{\sU}{\mathscr{U}}
\newcommand{\sV}{\mathscr{V}}
\newcommand{\sX}{\mathscr{X}}
\newcommand{\sZ}{\mathscr{Z}}
\newcommand{\uDef}{\underline{\Def}\!\,}
\newcommand{\bm}{\boldsymbol{m}}
\newcommand{\bo}{\boldsymbol{0}}
\newcommand{\Iea}{I^{\text{\it ea}}}
\newcommand{\Ies}{I^{\text{\it es}}}
\newcommand{\eps}{{\varepsilon}}
\newcommand{\tea}{\tau^{\text{\it ea}}}
\newcommand{\tes}{\tau^{\text{\it es}}}
\newcommand{\Hilb}{\kh ilb}
\newcommand{\Dou}{\kd ou}
\DeclareMathOperator{\Cl}{\kc\!\!\;\ell\!\:}
\DeclareMathOperator{\mt}{mt}
\DeclareMathOperator{\pr}{pr}
\DeclareMathOperator{\tHilb}{Hilb\!\:}
\DeclareMathOperator{\tDou}{Dou\!\:}
\DeclareMathOperator{\Sym}{Sym}
\DeclareMathOperator{\Def}{\kd\!\!\:\it{ef}}
\begin{document}

\title{Straight Equisingular Deformations and Punctual Hilbert Schemes}
\author{Gert-Martin Greuel \footnote {Department of Mathematics, University of Kaiserslautern} \\[1.0ex]{\em Dedicated to L\^{e} D\~{u}ng Tr\'{a}ng on the occasion of his 70th Birthday}}



\date{}
\maketitle

\begin{abstract}
We study "straight equisingular deformations''\!, a linear subfunctor of all equisingular deformations  and describe their seminuniversal deformation by an ideal containing 
the fixed Tjurina ideal.  
Moreover, we show that the base space of the seminuniversal straight equisingular deformation
appears as the fibre of a morphism from the $\mu$-constant stratum onto a punctual Hilbert scheme parametrizing certain zero-dimensional schemes concentrated in the singular point.
Although equisingular deformations of plane curve singularities are very well understood, we believe that this aspect may give a new insight in their inner structure. \blfootnote{
2010 Mathematics Subject Classification: {14B05, 14B07, 32S05, 32S15}}
\end{abstract}

\addcontentsline{toc}{section}{Introduction}

\vspace{1.0cm}

\begin{center}
\textbf{\large Introduction}
\end{center}

Let $z$ be a singular point of the reduced curve $C$ contained in the smooth complex surface $\Sigma$. The topological type of the germ $(C,z)$ can be described by the resolution graph of a good embedded resolution of  $(C,z)$, and an equisingular deformation of $(C,z)$  is one with constant resolution graph. In the first section we fix the notations and recall the classical concepts of constellation, clusters and proximities, resulting in the notions of the essential tree 
$\sT^*(C,z)$ and the cluster graph $\G(C,z)$, which is  also a complete invariant 
of the toplogical type of $(C,z)$. 
These notions go back to Enriques and Chisini, for a comprehensive and modern treatment of we refer to \cite {Cas}.

A deformation of  $(C,z)$ is equisingular or an es-deformation, if 
there exist equimultiple sections through the points of the essential tree (i.e. through the non-nodes of a minimal good resolution) of $(C,z)$ such that the family can be blown 
up successively along these sections. Those deformations for which these sections can be simultaneously trivialized are called straight equisingular or straight es. These deformations have been considered by J. Wahl in \cite{Wahl} (not under this name) and they were further characterized in \cite{GLS07}, showing, e.g., that for semiquasi-homogeneous and Newton nondegenerate singularities every equisingular deformation is straight equisingular. 
In Section 2 we introduce, in addition, for any plane curve singularity 
the topological singularity ideal $I^s (C,z)$ such that $I^s (C,z) / \Iea_{fix}(C,z)$, 
with $\Iea_{fix} (C,z)$ the fixed Tjurina ideal, is the tangent space to straight equisingular deformations. This provides an explicit description of the semiuniversal straight equisingular deformation as a linear subspace of all equisingular deformations. We illustrate this by an example using {\sc Singular} \cite{DGPS}.

In Section 3 we consider the punctual Hilbert scheme, parametrizing zero-dimensional schemes $Z \subset \Sigma$ with support in one point.  We are interested in the {\em rooted Hilbert scheme}, a subscheme of the punctual Hilbert scheme consisting 
of the {\em topological singularity scheme} $Z^s(C,z)$ defined by the  ideal $I^s (C,z)$, with fixed cluster graph $\G = \G(C,z)$. Mapping an equisingular family of plane curve singularities to the induced family of topological singularity schemes, provides a morphism from 
the functor of all es-deformations to the rooted Hilbert functor. We prove that the fibre of the induced morphism from the base space of the semiuniversal es-deformation of $(C,z)$ to the rooted Hilbert scheme  is exactly the base space of the semiuniversal straight equisingular deformations.
This gives in addition a nice formula for the rooted Hilbert scheme in terms of the number of free vertices in the essential tree $\sT^*$ of $(C,z)$.

Let us finish the introduction by commenting on the positive characteristic case.
Though we work here over the complex numbers, the results should hold also for algebraically closed ground fields of characteristic zero.  Equisingular deformations of plane curve singularities in arbitrary characteristic have been studied  in \cite{CGL1} and  \cite{CGL2}. In positive characteristic one has to distinguish between ``good''  and ``bad'' characteristic and we conjecture that the results of this paper can be extended to good characteristic as well.  However, in bad characteristic one has to distinguish between equisingular deformations and weakly equisingular deformations, i.e., those that become equisingular after a finite base change, with very different semiuniversal deformations. 
It would be interesting to study straight es-deformations in positive characteristic, in particular in bad characteristic.
\medskip

{\bf Acknowledgement:}  The article is a revised excerpt of parts from Chapter I.1 of the forthcoming book \cite{GLS18} by Chistoph Lossen, Eugenii Shustin and the author, where more details can be found.
 I like to thank C. Lossen and E. Shustin for the many pleasant years of joint work on the book. 
\medskip

\section{Equisingularity for Plane Curve Singularities}
Before we define in the next section straight equisingular deformations, recall the
definition of an equisingular deformation of a reduced plane complex curve singularity $(C,z) \subset (\Sigma,z)$. 
\medskip

It is well-known that
we can resolve the singularity  $(C,z)$ by finitely many
blowing ups of points. More precisely, there exists a {\em good
embedded
resolution}\index{resolution!good embedded}
\index{embedded!resolution}\index{good!embedded resolution} of the singularity of $C$ at $z$, that is, a sequence of morphisms of smooth surfaces
\[
\pi: \Sigma_{n+1}\xrightarrow{\pi_{n+1}}\Sigma_n\xrightarrow{\pi_n}\cdots \to
\Sigma_1\xrightarrow{\pi_1}\Sigma_0=\Sigma,
\]
such that $\pi_i$ is the blowing up of a point \mbox{$q_{i-1}\in\Sigma_{i-1}$}
infinitely near to
\mbox{$q_0=z$} and such that in a neighborhood of
\mbox{$E=\pi^{-1}(z)$}, the {\em reduced total transform} of $C$
\[
\pi^\ast(C)_{red}=\widetilde{C}+\sum\limits_{i=1}^{n+1} E_i
\]
is a divisor with
normal crossings\index{divisor!with normal crossings}, that is, a
hypersurface having only nodes as singularities (see \cite[Section
I.3.3]{GLS07} for details).
Here $\widetilde{C}$ is the (smooth) {\em strict transform} of $C$,  $\pi^\ast(C)$ (with its scheme structure) is the {\em total transform} and $E_i = \pi_i^{-1}(q_{i-1}) \cong \P^1$, $i=1, \ldots, n+1$, is the {\em exceptional
divisor} of $\pi_i$ in $\Sigma_i$ (identified with its image in $\Sigma_j$, $j\ge i$). 
A good embedded resolution $\pi$ is called a {\em minimal (good) embedded resolution}, 
if only non-nodal singular points of the reduced total
transform of $(C,z)$ are blown up in the resolution process (if  $(C,z)$ is smooth, we do not blow up anything, i.e. $\pi$ is the empty map). It is well-known
that such a minimal good resolution is unique up to isomorphism over $\Sigma$.

For $q\in\pi^{-1}(z)$ we denote by 
$C_{(q)}, \widehat C_{(q)}$ and $\widetilde C_{(q)}$ the respective
germs at $q$ of the strict, the total  and the reduced total transform of $C$  and and by $\mt C_{(q)}, \mt\widehat C_{(q)}$ and $\mt\widetilde C_{(q)}$ their multiplicities. 
If the germ $C_{(q)}$ is non-empty,
we say that the curve $C$ {\em goes through} the infinitely near
point $q$ (or that $q$ {\em belongs to} $C$).
\medskip

Recall that classically $E_1 \subset \Sigma_1$ and its strict transforms in $\Sigma_i, \ i>1$, together with $z$, is called the {\em first infinitesimal neighbourhood}
\index{first infinitesimal neighbourhood} \index{infinitesimal neighbourhood!first}
of $z\in \Sigma$. For $i > 1$ the $i$-th infinitesimal neighbourhood of $z$  consists of points in the first infinitesimal neighbourhood of some point in the
$i-1$-st infinitesimal neighbourhood of $z$. Any point belonging to some infinitesimal neighbourhood of $q_i, i\geq 0,$ is called an {\em infinitely near point} 
of $q_i \in \Sigma_i$ or a point {\em infinitely near to\/} $q_i$.

\begin{definition}\label{def:constellation}{\bf (Constellation)}
If  $(q'_0,\pi'_1,q'_1,\dots,\pi'_m,q'_m)$  with $q'_i\in \Sigma'_i$ and
 $\pi'_i:\Sigma'_i\to \Sigma'_{i-1}$ blowing up $q'_{i-1}$
is another sequence of  a good embedded resolution of $(C,z)$, then
$(q_0,\pi_1,q_1,\dots,\pi_n,q_n)$
and   $(q'_0,\pi'_1,q'_1,\dots,\pi'_m,q'_m)$
are called equivalent if $m=n=-1$ or $m=n\geq 0$ and
if there is a $\Sigma$-isomorphism $\Sigma_{n+1} \to \Sigma'_{n+1}$.
An equivalence class of such sequences is called a {\em constellation} (of $(C,z)$)
and denoted by $ \sT(C,z)$. 
\end{definition}
It is easy to see that the $\Sigma$-isomorphism preserves infinitely nearness and 
that successively blowing up two different points in the two possible orders gives rise to $\Sigma$-isomorphic surfaces,  and hence the same constellation.
The concept of a constellation has been introduced to
describe isomorphism classes of good embedded resolutions.

\begin{definition}\label{def:tree}{\bf (Constellation graph, essential tree, proximity)}
Let $\sT =  \sT(C,z) = (q_0,\pi_1,q_1,\dots,\pi_n,q_n)$ be a constellation of $(C,z)$. 
\medskip

(1) We introduce a partial ordering on the points $q_0,\dots,q_n$ by
$$ q_i \leq q_j \;\: :\Longleftrightarrow\;\: q_j \text{ is infinitely near to } q_i\,.$$
For each $0 \leq i \leq n$, we set the
{\em level of} $q_i$ as $ \ell(q_i) := \# \bigl\{ j \!\;\big|\;\! q_i \geq q_j\bigr\} -1\,.$ 
The point $z=q_0$, the {\em origin of} $\sT$, is the only point of level $0$. 
\medskip

(2) The   {\em graph of the constellation $\sT$} 
is the oriented tree  $\Gamma_\sT$ (with root $z=q_0$) whose
\begin{itemize}
\item points are in 1-1 correspondence with $q_0,\dots,q_n$ and
 \item edges with pairs $(q_j,q_i)$ s.t. $\ell(q_j)=\ell(q_i)+1$ and $q_j\geq q_i$.  
\end{itemize}
Hence $p \geq q$ iff there is an oriented path in 
$\Gamma_\sT$ from $p$ to $q$.
\medskip

(3) A point $z\neq q\in \sT(C,z)$ is called {\em  essential}
for $C$ iff the reduced total transform of $C$ does not have a node at $q$. The origin $z$ of $\sT(C,z)$ is called essential for $C$, 
iff the germ $(C,z)$ is not smooth.  We call a point $q\in \sT(C,z)$ a
{\em singular essential point\/}\index{essential!point!singular} for $C$ if
the strict transform of $C$ at $q$ is singular.
The maximal finite sub-constellation of $\sT(C,z)$ such that all of its
points are essential is called the {\em essential constellation} of $(C,z)$ and denoted by $ \sT^\ast (C,z)$. 

By abuse of notation, a constellation  $\sT$ is also called a tree and $ \sT^\ast (C,z)$ is called the {\em essential tree} of $(C,z)$.
It describes the minimal embedded resolution of $(C,z)$.
\medskip

(4) Finally, we call 
$q_j$ {\em proximate to} $q_i$ (notation $q_j\dashrightarrow q_i$), if $q_j$ is a point 
in $E_{q_i}$, the exceptional divisor of $\pi_{i+1}$, or in any of its strict transforms.
$q_j$ is a {\em satellite} point if it is proximate to (at least) two
 points $q_i$, \mbox{$0\leq i \leq j\!\!\:-\!\!\:1$},
otherwise it is {\em free}. 
\end{definition}

It is clear that a point $q_j$ cannot be proximate to more than two
points since the exceptional divisors have normal crossings. Notice that each satellite point in
$\sT(C,z)$ is an essential point. It may well be a non-singular essential
point for $C$.

\medskip

It is classically known that the topological type of $(C,z)$ can be characterized by several different sets of discrete data (see \cite[Section I.3.4]{GLS07}
 for a short overview). One characterization is by the graph 
$\Gamma_{\sT}$,  $\sT = \sT(C,z)$ a constellation of $(C,z)$, 
together with the multiplicities 
$\mt\widehat C_{(q)}, \ q \in \sT$, of the total transform (or, equivalently, the multiplicities 
$\mt\widetilde C_{(q)}$ of the reduced total transform). 
This weighted oriented graph is sometimes called a 
{\em resolution graph} of $(C,z)$.

\begin{remark}\label{rem:proximityeq}
We have by \cite[Proposition I.1.11]{GLS18} the following {\em proximity equality}:
\begin{equation*}
\label{proximity equality}
\mt C_{(p)}\: =  \!\!\!\sum_{
\renewcommand{\arraystretch}{0.5}
\begin{array}{c}
\scriptstyle{ q \in \sT} \\
\scriptstyle{q\dashrightarrow p }
\end{array}
}\!\!\!
\mt C_{(q)}\,.
\end{equation*}
The difference between the multiplicities of
the total and strict transform of $C$ at a point $q\in \sT$ is
$$  \mt \widehat{C}_{(q)} -\mt C_{(q)} \:=
\!\!\sum_{\renewcommand{\arraystretch}{0.5}
\begin{array}{c}
\scriptstyle{p \in \sT} \\
\scriptstyle{q \dashrightarrow p }
\end{array}}\!\! \mt \widehat{C}_{(p)} $$
(see \cite[Remark I.1.11.1]{GLS18}). Moreover 
$ \mt \widetilde{C}_{(q)} -\mt C_{(q)} = $ 1 or 2, depending if one or two 
points are proximate to $q$.
In particular, the multiplicities of the strict transforms of $C$
 together with the proximities (\mbox{$q\dashrightarrow p$}) determine
the multiplicities of the total transforms and hence can be used to describe the topological type of $C$. This is used in the following definition.
\end{remark}

\begin{definition}\label{def:cluster}{\bf (Cluster, Cluster graph)}
Let  $\sT^\ast = \sT^\ast(C,z) =(z\!\!\:=\!\!\:q_0,\pi_1,q_1,\dots,\pi_n,q_n)$ be the essential tree of $(C,z)$ and 
${\bm}=(m_0,\dots, m_n)$ the vector of multiplicities with $m_0:=\mt(C,z)$ and $m_i:=\mt C_{(q_i)}$, the
multiplicity of the strict transform of $C$ at $q_i$, $1\leq i\leq n$. We call the tuple
$(\sT^\ast,{\bm})$ the {\em cluster} of $(C,z)$.
The triple
$$\G(C,z) = \G(C,\sT^\ast) := (\Gamma_{\sT^\ast},\dashrightarrow, {\bm}),$$
consisting of $\Gamma_{\sT^\ast}$,  the graph of $\sT^\ast$, 
the binary relation defined by $q\dashrightarrow p$
if $q$ is proximate to $p$ and the vector $\bm$ of multiplicities of the strict transforms
is called  the {\em cluster graph} of $(C,z)$. 
\end{definition}
Notice that the cluster graph
$\G(C,z)$ determines (and is determined by) the topological type  the curve singularity $(C,z)$.
\\


We consider now an {\em embedded
deformation}\index{deformation!embedded} \index{embedded deformation}
$(i,\Phi,\sigma)$ of
$(C,z)\subset(\Sigma,z)$, {\em with
section}\index{deformation!embedded!with section} over a complex
germ $(T,0)$. This is a commutative diagram
$$
\xymatrix@C=16pt@R=24pt{ \ (C,z)\ \ar@{^{(}->}^-{i}[r]\ar[d] &
({\mathcal C}, z)\ar[d]^-{\Phi}\ \ar@{^{(}->}[r]
& ({\mathcal M},z)\ar[dl]^-{
} \\
\ \{0\}\ \ar@{^{(}->}[r] & (T,0)\ar@/^/[u]^-{\sigma}}
$$ of morphisms of complex space germs with $({\mathcal C},z)\subset({\mathcal
M}, z)$ a hypersurface germ, $\sigma$ a section of $\Phi$. $\Phi$
is assumed to be flat as well as $({\mathcal M},z)\to(T,0)$ which has $(\Sigma,z)$ a
special fibre. An embedded deformation (without section), denoted by
$(i,\Phi)$, is given by a diagram as above but with $\sigma$
deleted. We usually choose small representatives for the germs and
denote them with the same letters, omitting the base points.
Note that, for small representatives, we have ${\mathcal M}\simeq\Sigma\times T$ over $T$,
and we have morphism $C\overset{i}{\hookrightarrow}{\mathcal
C}\overset{\Phi}{\to}T$ with fibres the reduced curves ${\mathcal C}_t=\Phi^{-1}(t)$, where
we identify $C$ with ${\mathcal C}_0$ and write $z$ instead of $(z,0)\in \Sigma \times T$.

 If ${\mathcal M}=\Sigma\times T$ with $\Phi$ the projection and if
$\sigma(t)= (z,t)$, then $\sigma$ is called the {\em trivial
section}. By \cite[Proposition II.2.2]{GLS07} every section can be
locally trivialized by an isomorphism ${\mathcal M}\simeq\Sigma\times T$
over $T$.
\medskip

The following definition of equisingularity makes sense for arbitrary complex germs $(T,0)$, even for Artinian ones. Let $(C,z)$ be defined by
$f\in{\mathcal O}_{\Sigma,z}$ with multiplicity $m=\mt(f)$. 

\begin{definition}\label{def:es-deformation}{\bf (Equisingular deformation)}  
If $(C,z)$ is smooth, any deformation of $(C,z)$ is called equisingular. If $(C,z)$ 
is singular let ${\sT}^*={\sT}^*(C,z)$ be the essential tree of $(C,z)$ and
for each $q\in{\sT}^*, \, q \neq z,$ let $\widetilde C_{(q)}$ be the germ at $q$ of the reduced total transform of $(C,z)$.
An embedded deformation $(i,\Phi)$ of $(C,z)$ over $(T,0)$ is then called
{\em equisingular}\index{equisingular!deformation}\index{deformation!equisingular}, or an {\em es--deformation}\index{es-deformation}, if the following three conditions hold:
\begin{enumerate}\item[(1)] There exists a section
$\sigma$ of $\Phi$, called {\em equimultiple section},
\index{section!equimultiple}\index{equimultiple!section}such that
$\Phi$ is equimultiple along $\sigma$.  If $({\mathcal
C},z)$ is defined by $F\in{\mathcal O}_{{\mathcal M},z}$ this means that $F\in
I^m_\sigma$, where $I_\sigma$ denotes the ideal of
$\sigma(T,0)\subset({\mathcal M},z)$.

\item[(2)] For each $q\in{\sT}^*, \, q \neq z,$ there is a sequence of morphisms of germs (or of small representatives)
$$\Phi_{(q)}:\widetilde{\mathcal C}_{(q)}\hookrightarrow{\mathcal
M}_{(q)}\overset{\pi_{(q)}}{\to}{\mathcal M}\to T,$$ 
where $\Phi_{(q)}: \widetilde{\kc}_{(q)} \to T$
 is an embedded deformation of $\widetilde C_{(q)}\subset\Sigma_{(q)}$ 
along an equimultiple section
$\sigma_{(q)}:T\to\widetilde{\mathcal C}_{(q)}$ of $\Phi_{(q)}$. 

\item[(3)]  Each ${\mathcal M}_{(q)}$, $q\ne z$, is obtained by
blowing up some ${\mathcal M}_{(p)}$ along the section $\sigma_{(p)}$, 
$p$ of smaller level than $q$, with $\widetilde{\mathcal C}_{(q)}$
the reduced total transform of $\widetilde{\kc}_{(p)}$ (for $p=z$ we take
$\km_{(p)} =\km, \, \sigma_{(p)}= \sigma$ and $\widetilde{\kc}_{(p)} = \kc$).
\end{enumerate}
\end{definition}

See \cite[Definition II.2.6]{GLS07} for a more detailed description.

\begin{remark} \label{rm:es-deformation}
(1) Condition (1) of Definition \ref{def:es-deformation} implies that for $t\in T$ sufficiently close to $0$ we have 
$F_t \in I^m_{\sigma(t)}$, where $F_t$ defines the germ $({\mathcal C}_t,\sigma(t))$ in $({\mathcal M}_t,\sigma(t))$ and we have
$I_{\sigma(t)}=\fm_{{\mathcal C}_t,\sigma(t)}$. Hence, the multiplicity of
$({\mathcal C}_t,\sigma(t))$ is constant for $t$ near $0$. For reduced base
spaces this is equivalent to the given definition of equimultiplicity.

(2) Recall that, for an equisingular deformation, the equimultiple sections 
$\sigma_{(q)}$ through all essential points are unique (cf.\cite[Proposition II.2.8]{GLS07}).
That is, after blowing up an equimultiple section, there is a unique section along which the blown up family is equimultiple.

(3) Let $(i,\Phi,\sigma)$ be an embedded deformation of
$(C,z)\subset(\Sigma,z)$ along the section $\sigma$ over a reduced 
complex germ $(T,0)$. Then $\Phi$ is equisingular iff the cluster graph $\G({\mathcal C}_t,\sigma(t))$ is constant for $t \in T$.
\end{remark}

\section{Straight Equisingular Deformations} \label{s1.2.4}

We consider now a special class of equisingular deformations, originally introduced by Wahl, which we call {\em straight}.

Assume that the (unique) equimultiple sections $\sigma_{(q)}:T\to\widetilde{\mathcal C}_{(q)}$ from
Definition \ref{def:es-deformation} for an es-deformation of $(C,z)$ over a complex germ $(T,0)$
are all trivial. We know that they can
be trivialized for each $q$ by a local isomorphism of germs ${\mathcal
M}_{(q)}\simeq\Sigma_{(q)}\times T$ over $T$ at $q$, but in general not
simultaneously for all $q$ by an isomorphism of $({\mathcal M},z)$ over $T$
(e.g. the cross-ratio of more than three sections through one exceptional component
is an invariant).
Those deformations for which these sections can be simultaneously trivialized are called straight equisingular.

\begin{definition}\label{straight}{\bf (Straight equisingular deformation)}
A deformation with section of the reduced plane curve singularity $(C,z)$ is called
{\em straight equisingular}\index{equisingular!deformation!straight}\index{straight equisingular} or a {\em straight es-deformation}
if it is an equisingular deformation of $(C,z)$ along the trivial section,
such that the equimultiple sections $\sigma_{(q)}, q \in \sT^*(C,z)$, through the non-nodes of the reduced total transform of $(C,z)$ from Definition \ref{def:es-deformation} are all trivial.

We denote by $\Def^s_{(C,z)}$ the category of straight es-deformations, the full
subcategory of the category $\Def^{sec}_{(C,z)}$ of deformations with section, and by $\uDef^s_{(C,z)}$ the functor of isomorphism  classes  of straight es-deformations.
\end{definition}

Let us give a concrete description of straight equisingular deformations, using the notations from Definition \ref{def:es-deformation}:

Denote by
$\widetilde F_{(q)}\in{\mathcal O}_{{\mathcal M}_{(q)}}$ a generator of the
ideal of $\widetilde{\mathcal C}_{(q)}\subset{\mathcal
M}_{(q)}=\Sigma_{(q)}\times T$ (the reduced total transform) and by $\fm_{(q)}\subset{\mathcal
O}_{\Sigma_{(q)}}$ the maximal ideal. The condition that
$\sigma_{(q)}$ is the trivial equimultiple section of $\Phi_{(q)}$ is
equivalent to $$\widetilde F_{(q)}\in\fm_{(q)}^{\widetilde
m(q)}\cdot{\mathcal O}_{\Sigma_{(q)}}\times T\ ,$$ where $\widetilde
m_{(q)}$ is the multiplicity of $\widetilde C_{(q)}$.

If $\widehat F_{(q)}\in{\mathcal O}_{{\mathcal M}_{(q)}}$ defines the total
transform $\widehat {\mathcal C}_{(q)}\subset\Sigma_{(q)}\times T$ and if
$\widehat m_{(q)}$ is the multiplicity of the total transform $\widehat
C_{(q)}$ of the curve germ $(C,z)$, then this is also equivalent to 
$$\widehat F_{(q)}\in\fm_{(q)}^{\widehat m(g)}\cdot{\mathcal O}_{\Sigma_{(q)}\times T}.$$ 
This can be seen easily by induction on the number of
blowing ups to resolve $(C,z)$, using 
Remark \ref{rem:proximityeq}. For $q=z$, we understand
$\widetilde{\mathcal C}_{(q)}=\widehat{\mathcal C}_{(q)}={\mathcal C}$, defined by
$F\in{\mathcal O}_{\Sigma\times T}$, and both conditions mean
$F\in\fm_z^m\cdot{\mathcal O}_{\Sigma\times T}$ with $m=\mt(C,z)$.
\smallskip

So far everything works for an arbitrary complex germ $(T,0)$. If
$(T,0)$ is reduced then the equimultiplicity condition for the trivial sections
$\sigma_{(q)}$, $q\in{\kt}^*$, is equivalent to
$$\mt(C,z)=\mt({\mathcal C}_t,z)\quad\text{for}\quad q=z$$
 and for $q\neq z$ either to
$$\mt\widetilde C_{(q)}=\mt(\widetilde{\mathcal C}_{(q),t},q)$$
or (equivalently) to
$$\mt\widehat C_{(q)}=\mt(\widehat{\mathcal C}_{(q),t},q)$$
for all $t\in T$ sufficiently close to
$0$. $\widetilde{\mathcal C}_{(q),t}$ resp. $\widehat{\mathcal
C}_{(q),t}$ denotes the reduced total,
resp. the  total transform of
the fibre ${\mathcal C}_t$ of $\Phi$ over $t$.

\bigskip

For $(T,0)=(\C,0)$ we can describe
the straight equisingularity condition even more explicitly. We do this for
the total transform $\widehat F_{(q)}\in{\mathcal O}_{\Sigma_{(q)}\times T}$
of $F\in{\mathcal O}_{\Sigma\times T}={\mathcal O}_{\Sigma,z}\{t\}$. Then
$F$ can be written as $$F=f+tg_1+t^2g_2+...$$ with $f,g_i\in{\mathcal
O}_{\Sigma,z}$ and $f$ defining $(C,z)$. Then $\widehat F_{(q)}$ reads as
$$\widehat F_{(q)}=\widehat f_{(q)}+t\widehat g_{1,(q)}+t^2\widehat g_{2,(q)}+...$$
where $\widehat f_{(q)},\widehat g_{i,(q)}\in{\mathcal O}_{\Sigma_{(q)}}$ denote
the total transforms of $f$, $g_i$.

If we fix $t=t_0\in T$ we write $F_{t_0}=F\big|_{t=t_0}\in \ko_{\Sigma,z}$ and $\widehat
F_{(q),t_0}=\widehat F_{(q)}\big|_{t=t_0}\in \ko_{\Sigma(q), q}$.
Then the equisingularity condition is equivalent to
$$\mt(f)=\mt(F_t)\ ,$$ $$\mt(\widehat f_{(q)})=\mt(\widehat F_{(q),t})$$ for
all $t\in T$ sufficiently close to $0$ and for all $q\in{\mathcal T}^*, q\neq z$.

Thus we get:

\begin{lemma}\label{ln1.23}
Let $f\in \C\{x,y\}$ define a reduced plane curve singularity $(C,0)$
with essential tree ${\sT}^\ast$ and let
$F(x,y,t)=f(x,y)+\sum_{i\geq 1} t^i g_i(x,y)$ define a
one-parametric deformation of $(C,0)$.
Then the following are equivalent:
\begin{enumerate}
\item [(i)]  The deformation of $(C,0)$ defined by $F$ is equisingular
and the (unique) equimultiple sections through the infinitely near
points $q\in{\sT}^*$ are trivial, i.e., the deformation is straight equisingular.

\item [(ii)] For all $i\geq 1$,
$$\mt(f)\le\mt(g_i),\ $$ $$\mt(\widehat
f_{(q)})\le\mt(\widehat g_{i,(q)})\ for \ q \in {\sT}^\ast, q\neq 0.$$

\end{enumerate}
\end{lemma}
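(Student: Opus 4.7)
The plan is to derive the lemma as a direct unwinding, in the case $(T,0)=(\C,0)$, of the ideal-theoretic characterizations of straight equisingularity established in the discussion preceding the statement. Essentially no new geometric work is needed: the content is expanding a power series in $t$ coefficient by coefficient.

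First I would recall from the text that a deformation given by $F\in\ko_{\Sigma,z}\{t\}$ is straight equisingular if and only if
\[
F\in \fm_z^m\cdot\ko_{\Sigma\times T}\quad\text{and}\quad \widehat F_{(q)}\in \fm_{(q)}^{\widehat m_{(q)}}\cdot\ko_{\Sigma_{(q)}\times T}\text{ for all }q\in\sT^*,\ q\neq z.
\]
Here $\widehat m_{(q)}=\mt(\widehat f_{(q)})$ and both statements encode that the trivial section is equimultiple, which in the reduced base case $T=\C$ also forces the multiplicity of the (total transform of the) fibre to be constant, i.e., equisingularity.

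Next, since $F=f+\sum_{i\geq 1} t^i g_i$ with $f,g_i\in\ko_{\Sigma,z}$, the first condition $F\in\fm_z^m\cdot\ko_{\Sigma\times T}$ is equivalent, by comparing coefficients of $t^i$, to $f\in\fm_z^m$ and $g_i\in\fm_z^m$ for every $i\geq 1$. The condition on $f$ holds automatically since $m=\mt(f)$, so what remains is $\mt(g_i)\geq \mt(f)$ for all $i\geq 1$. For the second condition I would use that taking the total transform under the sequence of blow-ups along the trivial section commutes with the $t$-expansion: since the sections $\sigma_{(q)}$ are trivial, the centers of blow-up are of the form $\{q\}\times T$, so the blow-up is obtained by base change $\times T$ from the blow-up of $\Sigma_{(p)}$ at $q$. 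Consequently
\[
\widehat F_{(q)}=\widehat f_{(q)}+\sum_{i\geq 1} t^i\,\widehat g_{i,(q)}\ \in\ \ko_{\Sigma_{(q)}}\{t\},
\]
and the same coefficient-comparison as before gives that $\widehat F_{(q)}\in\fm_{(q)}^{\widehat m_{(q)}}\cdot\ko_{\Sigma_{(q)}\times T}$ is equivalent to $\mt(\widehat g_{i,(q)})\geq \mt(\widehat f_{(q)})$ for all $i\geq 1$ (the inequality for the index $i=0$, namely for $\widehat f_{(q)}$ itself, being tautological).

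Combining the two characterizations yields the equivalence (i)$\Leftrightarrow$(ii). The only non-trivial point I would dwell on is the commutation of total transform with the $t$-expansion; I would justify it by induction on the length of the blow-up sequence, using at each step that blowing up a trivially embedded section commutes with the flat base change $\ko_{\Sigma_{(p)}}\hookrightarrow\ko_{\Sigma_{(p)}}\{t\}$. Everything else is formal, which is why I expect this to be the main (and only real) technical obstacle.
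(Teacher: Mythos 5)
Your proposal is correct and follows essentially the same route as the paper: the lemma there is obtained ("Thus we get") precisely by unwinding the preceding ideal-theoretic characterization $F\in\fm_z^m\cdot\ko_{\Sigma\times T}$, $\widehat F_{(q)}\in\fm_{(q)}^{\widehat m_{(q)}}\cdot\ko_{\Sigma_{(q)}\times T}$ coefficientwise in $t$, with the expansion $\widehat F_{(q)}=\widehat f_{(q)}+\sum_i t^i\widehat g_{i,(q)}$ used exactly as you do. Your explicit justification of the commutation of total transform with the $t$-expansion (blowing up trivial sections commutes with the flat base change to $\ko_{\Sigma,z}\{t\}$) is the one point the paper leaves implicit, and it is handled correctly.
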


\medskip
We define now an ideal defining the topological singularity scheme, that will be identified in Corollary \ref{cor:Is} with the tangent ideal to straight es-deformations.

\begin{definition}\label{def:topsing} {\bf (Topological singularity ideal and scheme)}
  Let $(C,z)\subset (\Sigma,z)$ be a reduced plane curve singularity and
  let $\sT^* = \sT^*(C,z)$ be the essential tree of $(C,z)$.
The ideal 
\begin{eqnarray*}
 I^s(C,z) & := &  I(C,\sT^*(C,z)) \subset \ko_{\Sigma,z}\\
& := & \bigl\{g \in \ko_{\Sigma,z} \:\big|\: \mt \widehat{g}_{(q)}
\geq \mt \widehat{C}_{(q)}\!\;\text{ for each } q \in \sT^*\bigr\}
\end{eqnarray*}
is called the {\it topological singularity ideal of} $(C,z)$. It defines 
the {\em topological singularity scheme} 
$$Z^s(C,z):=V(I^s(C,z)),$$
a zero-dimensional subscheme of $\Sigma$ supported
at $\{z\}$.
\end{definition}

\begin{remark} \label{rm1.30.1}
Let $\sT=\sT(C,z)$ be any constellation of $(C,z)$. Then we can define 
more generally the {\em cluster ideal} of $(C,z)$ w.r.t. $\sT$,
$$ I(C,\sT) = \bigl\{g \in \ko_{\Sigma,z} \:\big|\: \mt \widehat{g}_{(q)}
\geq \mt \widehat{C}_{(q)}\!\;\text{ for each } q \in \sT\bigr\},$$
and $Z(C,\sT) = V( I(C,\sT))$ the {\em cluster scheme} 
of $(C,z)$ w.r.t. $\sT$, which is 
supported at $\{z\}$.
\end{remark}

The following lemma can be proved by induction on the cardinality of $\sT^*(C,z)$,
see \cite[Lemma I.1.22]{GLS18}.

\begin{lemma}\label{ln1.25}
For $(C,z)\subset(\Sigma,z)$ a reduced plane curve singularity we have
$$\deg Z^s(C,z)=\dim_{\C}{\mathcal O}_{\Sigma,z}/I^s(C,z)=
\sum_{q\in\sT^{\ast}(C,z)}\frac{m_q(m_q+1)}{2}\ ,$$
with $m_q=\mt C_{(q)}$, the multiplicity of the strict transform of $C$ at $q$.
\end{lemma}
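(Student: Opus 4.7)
The plan is to induct on $n := \#\sT^*(C,z)$. The base cases are immediate: if $n = 0$ then $(C,z)$ is smooth, $I^s(C,z) = \ko_{\Sigma,z}$, and both sides are $0$; if $n = 1$ then $\sT^* = \{z\}$ and $I^s(C,z) = \fm_z^{m_0}$, whose codimension is the familiar $\binom{m_0+1}{2}$. For the induction to close cleanly, I would prove the same formula for any consistent weighted sub-cluster containing $z$, so that the sub-clusters produced below remain admissible to the hypothesis.

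For the inductive step with $n \geq 2$, let $\pi_1: \Sigma_1 \to \Sigma$ denote the blow-up of $z$ with exceptional divisor $E_1$, let $p_1, \ldots, p_k \in E_1$ be the points of $\sT^*$ proximate to $z$, and let $\sT^*_i := \{q \in \sT^* : q \geq p_i\}$ be the sub-tree rooted at $p_i$, equipped with the inherited multiplicities. Since the condition at $q=z$ forces $I^s(C,z) \subseteq \fm_z^{m_0}$, the codimension splits as
$$\dim_{\C} \ko_{\Sigma,z}/I^s(C,z) = \binom{m_0+1}{2} + \dim_{\C} \fm_z^{m_0}/I^s(C,z).$$
Every $g \in \fm_z^{m_0}$ determines an element $\widetilde g \in \ko_{\Sigma_1}$ near $E_1$ via $\pi_1^*g = u^{m_0}\widetilde g$, where $u$ is a local equation of $E_1$. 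Passing to germs at each $p_i$ gives the induced strict-transform map
$$\Phi:\ \fm_z^{m_0}/I^s(C,z)\ \longrightarrow\ \bigoplus_{i=1}^{k} \ko_{\Sigma_1,p_i}/I_i,$$
where $I_i$ is the cluster ideal at $p_i$ of the weighted sub-cluster $\sT^*_i$.

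The core of the argument is that $\Phi$ is an isomorphism. Well-definedness and injectivity rest on the proximity equality of Remark \ref{rem:proximityeq}: together with $\mt g \geq m_0$, it implies that the inequalities $\mt \widehat g_{(q)} \geq \mt \widehat C_{(q)}$ for $q \in \sT^*$ with $q \geq p_i$ are equivalent to the analogous cluster inequalities for $\widetilde g$ at the sub-cluster $\sT^*_i$ based at $p_i$. Granted the isomorphism, the inductive hypothesis applied to each $\sT^*_i$ yields
$$\dim_{\C} \fm_z^{m_0}/I^s(C,z) = \sum_{i=1}^{k}\sum_{q \in \sT^*_i}\binom{m_q+1}{2} = \sum_{q \in \sT^*\setminus\{z\}}\binom{m_q+1}{2},$$
which combined with the base term gives the claim. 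The main obstacle is the surjectivity of $\Phi$: one must realize any prescribed tuple of strict-transform data modulo the $I_i$ by a single $g \in \fm_z^{m_0}$. I would handle this by a Chinese-remainder-type interpolation at the distinct points $p_1, \ldots, p_k \in E_1 \cong \P^1$, using that each target quotient $\ko_{\Sigma_1,p_i}/I_i$ is Artinian and that $\fm_z^{m_0}$ surjects onto sufficiently high jets of its transforms along $E_1$.
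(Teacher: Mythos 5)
Your top-level strategy coincides with the one the paper points to (it gives no details itself, only the remark that the lemma ``can be proved by induction on the cardinality of $\sT^*(C,z)$'' with a reference to \cite{GLS18}): induction on $\#\sT^*$, strengthening to arbitrary consistent clusters, splitting off $\frac{m_0(m_0+1)}{2}$ at the root and passing to the first-neighbourhood points $p_1,\dots,p_k$. The map $\Phi$ is indeed well defined and injective, but notice that these two facts use no proximity input at all: they follow from the factorizations $\pi_1^*g=u^{m_0}\widetilde g$ and $\pi_1^*(C)=m_0E_1+\widetilde C$, which convert the conditions at $q\ge p_i$ verbatim into the sub-cluster conditions for $\widetilde g$. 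Injectivity only yields the inequality $\dim_\C\ko_{\Sigma,z}/I^s(C,z)\le\sum_q\frac{m_q(m_q+1)}{2}$; the whole content of the lemma is the surjectivity of $\Phi$, and that is exactly where consistency is indispensable. For an inconsistent cluster (take $m_z=1$ and virtual multiplicity $2$ at one point $p$ of the first neighbourhood) your $\Phi$ is still injective but not surjective, and the formula fails ($3$ conditions instead of $4$). So a proof in which the proximity inequalities are only invoked for well-definedness/injectivity cannot be complete.

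The mechanism you propose for surjectivity does not work as stated. In the chart $x=u$, $y=uv$ the virtual transform of $g=\sum_{j+k\ge m_0}a_{jk}x^jy^k$ is $\widetilde g=\sum_{d\ge 0}u^dP_d(v)$ with $\deg_v P_d\le m_0+d$; in particular $\widetilde g|_{E_1}$ is a polynomial of degree at most $m_0$. Hence $\fm_z^{m_0}$ does \emph{not} surject onto ``sufficiently high jets'' of its transforms along $E_1$: already at a single point $p_i$ you cannot prescribe $v$-jets of order $>m_0$ in the $u^0$-coefficient, and with several points the deficit grows. Consequently you cannot reduce to a Chinese-remainder argument modulo high powers of the $\fm_{p_i}$; you must interpolate only the jets actually demanded by the cluster ideals $I_i$, and the reason these fit inside the available degree bounds is precisely the proximity inequality $\sum_i m_{p_i}\le m_0$ (and its analogues further up the tree), which your sketch never brings to bear at this point. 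Either make this quantitative interpolation explicit, or use the other standard induction (delete a maximal point $q$ of the cluster and show, using consistency, that the conditions at $q$ impose exactly $\frac{m_q(m_q+1)}{2}$ new independent conditions). As written, the decisive step is asserted rather than proved.
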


\begin{example}\label{ex1.24.1}
(a)  Let $(C,z)$ be smooth. For the empty constellation $\sT$ we obtain
$I(C,\sT) = \ko_{\Sigma,z}$, that is, the scheme
  \mbox{$Z(C,\sT)$} is the empty scheme. If
$(C,z)$ has the local equation $y=0$ and if 
$\sT=( z \!=\! q_0,q_1,\dots, q_n)$  is the constellation obtained by 
blowing up $(C,z)$ $n$ times, then
$I(C,\sT) = \langle y, x^{n+1}\rangle\subset \C\{x,y\}$.

(b) If $(C,z)$ is an {\em ordinary} $m$-fold {\em singularity} (i.e. $m$ smooth
  branches with different tangents) then $\sT^\ast(C,z) = (z)$ and
$I^s(C,z) =  \mathfrak{m}^m_{\Sigma,z}$.
\end{example}

\noindent
The following lemma shows the relation of cluster schemes to
equisingular deformations of curve germs:

\begin{lemma}\label{1.4}
Let $Z^s(C,z)$ be the topological singularity scheme of $(C,z)$ defined by 
$I^s(C,z)$.
\begin{enumerate}
\item[(a)] A generic element $g\in I^s(C,z)$ defines $Z^s(C,z)$, in the sense that
$Z^s(C,z)$ = $Z^s(C',z)$ for $(C',z)$ the plane curve singulariy defined by $g$.

\item[(b)] The elements $g\in  I^s(C,z)$ defining $Z^s(C,z)$ have no common
  (infinitely near) base point outside of $\sT^*(C,z)$.
\item[(c)] Two generic elements $g,g'\in I^s(C,z)$ are topologically
  equivalent.
\end{enumerate}
\end{lemma}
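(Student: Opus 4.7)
The strategy is to reduce all three parts to the classical theorem (cf.\ \cite{Cas}) that the cluster $(\sT^*(C,z),\bm)$ of a plane curve singularity is \emph{strongly consistent}, so that the linear system of germs in $I^s(C,z)$ has exactly this cluster as its system of infinitely near base points, and a generic member realises it.  A convenient starting observation is that $f$ itself lies in $I^s(C,z)$ and satisfies $\mt\widehat{f}_{(q)}=\mt\widehat{C}_{(q)}$ for every $q\in\sT^*$, so the locus of equalities inside $I^s$ is non-empty and Zariski open.

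For part (a), I would first show that the subset
\[
U=\bigl\{g\in I^s(C,z)\;\big|\;\mt\widehat{g}_{(q)}=\mt\widehat{C}_{(q)}\text{ for all }q\in\sT^*\bigr\}
\]
is Zariski open and contains $f$.  Then, arguing inductively from the root $z$ downwards through $\sT^*$, I would shrink $U$ to an open subset $U^\prime$ on which, for each $q\in\sT^*$, the tangent cone of $\widetilde{g}_{(q)}$ at $q$ is sufficiently generic that the next blow-up centres of $g$ coincide with those of $\sT^*$ and no unexpected infinitely near singular points of $g$ appear.  This is precisely the content of strong consistency of $(\sT^*,\bm)$.  For $g\in U^\prime$, the curve $(C^\prime,z)$ defined by $g$ then has essential tree $\sT^*$ and multiplicity vector $\bm$ equal to those of $(C,z)$, so by Definition \ref{def:topsing} we obtain $I^s(C^\prime,z)=I^s(C,z)$ and hence $Z^s(C^\prime,z)=Z^s(C,z)$.

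For (b), every $q\in\sT^*$ is by construction a common base point of all of $I^s$.  Conversely, given an infinitely near point $q^\prime\notin\sT^*$, I would combine (a) with the further open condition that the (non-essential) base points of $g$ avoid $q^\prime$; strong consistency guarantees that this is a non-empty open condition, because no point outside $\sT^*$ is forced on the whole system.  Hence no $q^\prime\notin\sT^*$ can be a common base point.  Part (c) is then immediate from (a) together with the fact, recalled after Definition \ref{def:cluster}, that the cluster graph $\G(C,z)$ is a complete topological invariant: two generic $g,g^\prime\in I^s$ define curves with the same cluster graph $\G(C,z)$ and are therefore topologically equivalent.

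The main obstacle is the inductive step in (a), namely verifying strong consistency of the cluster of a plane curve singularity: one must guarantee that, beyond the multiplicity conditions of Remark \ref{rem:proximityeq}, no further infinitely near base points are forced on the linear system associated with $I^s$ above $\sT^*$.  Once this classical fact is established, (b) and (c) drop out from (a) with only bookkeeping on open conditions.
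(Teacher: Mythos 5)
Your overall architecture matches the paper's: prove (a) by exhibiting the good elements as a non-empty Zariski-open set, deduce (c) from (a) plus the fact that the cluster graph is a complete topological invariant, and outsource the hard non-degeneracy fact behind (b) to the classical theory. The differences are in where the external input comes from and in one preparatory step. The paper's proof of (a) first invokes finite determinacy of $f$ to replace $I^s(C,z)$ by the finite-dimensional vector space of polynomials in $I^s(C,z)$ of degree at most $d$; only then do phrases like ``Zariski-open'' and ``dense'' make sense, and density is obtained by the elementary observation that $f+tg$ has the right multiplicities for all but finitely many $t$. You skip this reduction entirely and speak of open loci inside the infinite-dimensional ideal $I^s(C,z)$; you should add the finite-determinacy step (or the bounded-degree parametrization spelled out in the note following the lemma) to make your openness and density claims precise. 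On the other hand, your proof of (a) is more explicit than the paper's about the point that equality of multiplicities along $\sT^*$ alone does not suffice --- one must also exclude extra essential points of $C'$ beyond $\sT^*$ (compare $y^2-x^2$ versus $y^2-x^3$ inside $\mathfrak{m}^2$) --- which the paper's one-line openness argument leaves implicit. For (b) the paper simply cites \cite[Proposition I.1.17]{GLS18}, whereas you appeal to consistency of the cluster of a curve singularity in the sense of Casas-Alvero \cite{Cas}; that is a legitimate alternative source for the same fact (the proximity equalities of Remark \ref{rem:proximityeq} give consistency, and consistent clusters impose no further base points on their linear system), though as phrased your argument for (b) comes close to restating the conclusion rather than deriving it, so you should cite the precise consistency theorem rather than gesture at it. Part (c) is handled identically in both.
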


Note that ``generic'' in (a) means: there exists a polynomial defining $Z^s(C,z)$, 
and if $d_0$ is the minimal degree of such a polynomial, then, for each
$d\geq  d_0$, the set of polynomials $g\in I^s(C,z)$ of degree at most $d$ defining $Z^s(C,z)$ is a Zariski-open, dense subset in the vector space
of all polynomials of degree at most $d$ and contained in $I^s(C,z)$.

\begin{proof}
(a) Let  \mbox{$f\in  \ko_{\Sigma,z}$} be a defining equation for $(C,z)$. As $f$ is (analytically) finitely determined, the essential tree $\sT^*(C,z)$
depends only on a sufficiently high jet of $f$. We may therefore assume 
that $f$ is a polynomial of degree $d$. Then the polynomials $g\in I^s(C,z)$
of degree $\leq d$ are parameterized by a finite dimensional vector space
of positive dimension. The conditions $\mt \widehat{g}_{(q)} =
  \mt \widehat{C}\!\,'_{(q)}$, $q\in \sT^*(C',z)$, define a
Zariski-open subset of it. The density follows, since for almost all
$t\in \C$ the germ $f\!\!\;+\!\!\;tg$ has precisely the
 multiplicities $\widehat{m}_q = \mt  \widehat{C}\!\,'_{(q)}$ at each $q\in \sT(C',z)$.

For the proof of (b) we refer to Proposition  \cite[Proposition I.1.17]{GLS18}. (c) follows from (a), since $Z^s$ determines the topological type.
\end{proof}
\medskip

We relate below the topological singularity ideal $I^s(C,z)$  to the {\em equisingularity ideal}
$$\Ies(C,z)= \Ies(f):=\left\{g\in\ko_{\Sigma,z}\,\left|\,
\renewcommand{\arraystretch}{1}
  \begin{array}{c}
f+\eps g\ \text{is an {\it es }-deformation of}\ (C,z)\\
\text{over}\ T_\eps\ \text{(along some section)}
\end{array}\right.
\!\!\:\right\}$$
resp., the {\em fixed equisingularity ideal}
$$\Ies_{\text{fix}}(C,z)= \Ies_{\text{fix}}(f):=\left\{g\in\ko_{\Sigma,z}\,\left|\,
\renewcommand{\arraystretch}{1}
  \begin{array}{c}
f+\eps g\ \text{is an {\it es }-deformation of}\ (C,z)\\
\text{over}\ T_\eps\ \text{along the trivial section}
\end{array}\right.
\!\!\:\right\}.$$
They satisfy
$$\Ies(C,z) =  j(f) + \Ies_{fix}(C,z).$$ 
Moreover, if  $j(f)$ denote the Jacobian ideal we call 
 $$\Iea(C,z) := \langle f,j(f)\rangle \ \text{ resp. } \ 
\Iea_{fix}(C,z) :=  \langle f,\fm_zj(f)\rangle$$
the {\em Tjurina ideal} resp. the {\em fixed Tjurina ideal}. 
 $T_\eps$ denotes the fat point with structure sheaf $\C[\eps], \ \eps^2=0$, 
and  $\uDef^-_{(C,z)}(T_\eps)$
 is the tangent space to the deformation functor $\uDef^-_{(C,z)}$, i.e.,
 $$\uDef^{-}_{(C,z)}(T_\eps) = \{ g\in \ko_{\Sigma,z} | f+ \eps g 
\in \Def^{-}_{(C,z)}(T_\eps) \} / \text {isomorphism in } \Def^{-}_{(C,z)}.$$
\smallskip

\begin{corollary}\label{cor:Is}
Let \mbox{$(C,z)\subset (\Sigma,z)$} be a reduced plane curve
singularity, defined by $f\in\ko_{\Sigma,z}$.

(1) If  $g\in I^s(C,z)$$f$, then $f+tg$ are topologically equivalent for all but finitely many $t\in\C$ and $f+tg$ defines an equisingular deformation of $(C,z)$ over $(\C,0)$
along the trivial section.

(2) We get \index{ideal!singularity!topological}\index{singularity!ideal!topological} \index{topological!singularity ideal}\index{$is$@$I^s$}
$$I^s(C,z)=I^s(f)=\left\{g\in\ko_{\Sigma,z}\,\left|\,
\renewcommand{\arraystretch}{1}
  \begin{array}{c}
f+\varepsilon g\ \text{is a straight es-deformation of } (C,z)\\
 \text{ over}\ T_\varepsilon  \text{ (along the trivial section)}
 \end{array}\right.
\!\!\:\right\}$$
and $I^s(C,z)/I^{ea}_{fix}(C,z)$ is the tangent space to
the functor of (isomorphism classes of) straight es-deformations.

(3) We have the inclusions $\Iea_{fix}(C,z) \subset \Iea(C,z)\subset \Ies(C,z)$ and
$$\Iea_{fix}(C,z)\subset I^s(C,z)\subset \Ies_{fix}(C,z) \subset \Ies(C,z).$$\index{$ies$@$\Ies$}
$\uDef^{es}_{(C,z)}(T_\eps) = \Ies(C,z)/ \Iea(C,z)$ resp.  $\uDef^{es,fix}_{(C,z)}(T_\eps) =\Ies_{fix}(C,z)/\Iea_{fix}(C,z)$ is the tangent space
to the functor of isomorphism classes of es-deformations resp. of es-deformations with (trivial) section. The dimensions satisfy
$$ \dim_\C \Ies(C,z)/\Ies_{fix}(C,z) = \dim_\C \Iea(C,z)/ \Iea_{fix}(C,z) = 2,$$
$$ \dim_\C \Ies_{fix}(C,z)/\Iea_{fix}(C,z) = \dim_\C \Ies(C,z)/ \Iea(C,z).$$
Moreover, the forgetful morphism 
  $$\Ies_{fix}(C,z)/\Iea_{fix}(C,z) \to  \Ies(C,z)/\Iea(C,z)$$
  is an isomorphism.
\end{corollary}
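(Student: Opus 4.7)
The plan is to treat the three parts in turn, bootstrapping on Lemma \ref{ln1.23} (the explicit characterization of straight es--deformations over $(\C,0)$) and Lemma \ref{1.4} (genericity of $I^s(C,z)$).

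\textbf{Part (1).} Given $g\in I^s(C,z)$, the defining condition $\mt\widehat{g}_{(q)}\ge\mt\widehat{C}_{(q)}$ for every $q\in\sT^*$ immediately yields
$\mt(\widehat{f}_{(q)}+t\widehat{g}_{(q)})\ge\mt\widehat{f}_{(q)}=\mt\widehat{C}_{(q)}$
for all $t$, and, on a Zariski--open subset of $t\in\C$, equality holds (in particular at $t=0$). Thus $(f+tg)$ realises the same multiplicity sequence along the essential tree as $f$, so by the multiplicity formulation of equisingularity (Remark \ref{rm:es-deformation}(3)) the one--parameter family is an es--deformation of $(C,z)$, and it is along the trivial section by construction. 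Topological equivalence for all but finitely many $t$ then follows from Lemma \ref{1.4}(a),(c).

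\textbf{Part (2).} I would specialise the criterion of Lemma \ref{ln1.23} to $T=T_\varepsilon=\operatorname{Spec}\C[\varepsilon]/(\varepsilon^2)$. Writing $F=f+\varepsilon g$, one has $\widehat F_{(q)}=\widehat f_{(q)}+\varepsilon\widehat g_{(q)}$, and because $\varepsilon^2=0$ the condition
$\widehat F_{(q)}\in\fm_{(q)}^{\widehat m(q)}\cdot\ko_{\Sigma_{(q)}\times T_\varepsilon}$
is equivalent, given $\widehat f_{(q)}\in\fm_{(q)}^{\widehat m(q)}$, to $\widehat g_{(q)}\in\fm_{(q)}^{\widehat m(q)}$, i.e.\ to $\mt\widehat g_{(q)}\ge\mt\widehat C_{(q)}$. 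Collecting these inequalities for all $q\in\sT^*$ (and the analogue at $q=z$) is exactly the definition of $I^s(C,z)$. Dividing by the trivial deformations, which are by definition those induced from an infinitesimal automorphism of $(\Sigma,z)$ fixing $z$ and fixing the embedded curve up to first order, i.e.\ by $\Iea_{fix}(C,z)=\langle f,\fm_z j(f)\rangle$, I obtain the tangent space description.

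\textbf{Part (3).} The chain $\Iea_{fix}\subset\Iea\subset\Ies$ is standard; the chain
$\Iea_{fix}\subset I^s\subset\Ies_{fix}\subset\Ies$
is obtained by observing that every element of $\fm_z j(f)$ satisfies the multiplicity bounds defining $I^s$ (differentiate and use $f\in\fm_z^m$), while straight es is a special case of es--with--section, which is a special case of es. For the two equalities of dimension $2$, I would argue that forgetting the (equimultiple) section amounts to choosing an infinitesimal translation of $z$ inside $(\Sigma,z)$; since the germ is smooth of dimension two, this contribution is always exactly $T_z\Sigma\cong\C^2$, giving both $\dim_\C\Ies/\Ies_{fix}=2$ and $\dim_\C\Iea/\Iea_{fix}=2$.

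\textbf{The forgetful isomorphism.} The main conceptual step, and the one I expect to require most care, is the claim that
$\Ies_{fix}(C,z)/\Iea_{fix}(C,z)\to\Ies(C,z)/\Iea(C,z)$
is an isomorphism. Surjectivity and injectivity should both follow from the uniqueness of the equimultiple section (Remark \ref{rm:es-deformation}(2)) applied over the fat point $T_\varepsilon$: any es--deformation $f+\varepsilon g$ carries a \emph{unique} equimultiple section, so one may act by an infinitesimal automorphism of $(\Sigma,z)$ to move this section to the trivial one, producing a representative in $\Ies_{fix}$; this gives surjectivity. For injectivity one observes that if $g,g'\in\Ies_{fix}$ differ by an element of $\Iea$, the translation part of that element must vanish because both sections are already the trivial one, hence the difference lies in $\Iea_{fix}$. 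The two dimension formulas then drop out from the commutative diagram with exact rows
\[
0\to\Iea_{fix}\to\Iea\to\Iea/\Iea_{fix}\to 0,\qquad 0\to\Ies_{fix}\to\Ies\to\Ies/\Ies_{fix}\to 0,
\]
combined with the identification of both right-hand quotients with $T_z\Sigma$.
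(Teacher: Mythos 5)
Your treatment of (1) and (2) follows essentially the paper's own route: the paper simply invokes the proofs of Lemma \ref{1.4} and of Lemma \ref{ln1.23} (the latter applied to $f+tg$ with $t^2=0$), which is exactly your specialization to $T_\eps$. Where you genuinely diverge is the forgetful isomorphism: the paper gets surjectivity from the identity $\Ies(C,z)=j(f)+\Ies_{\text{fix}}(C,z)$ (so the cokernel $\Ies/\bigl(j(f)+\Ies_{\text{fix}}\bigr)$ vanishes) and then injectivity for free from the equality of the two finite dimensions established just before; you argue both directions via uniqueness of the equimultiple section (Remark \ref{rm:es-deformation}(2)) over $T_\eps$ -- moving the unique section to the trivial one by an ambient infinitesimal automorphism gives surjectivity, and the fact that any isomorphism of es-deformations carries the trivial equimultiple section to itself forces the translation part of the automorphism to vanish, i.e. $\Ies_{\text{fix}}\cap\Iea\subset\Iea_{\text{fix}}$. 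This is a valid and more conceptual alternative, provided you spell out that a class in $\Ies/\Iea$ is realized by an automorphism $(x,y)\mapsto(x+\eps h_1,y+\eps h_2)$ together with a unit, and that the uniqueness of equimultiple sections is available over the Artinian base.

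Two steps, however, are under-justified as written. First, the inclusion $\Iea_{\text{fix}}(C,z)\subset I^s(C,z)$: ``differentiate and use $f\in\fm_z^m$'' does not suffice, since you must verify $\mt\,\widehat{(h f_x)}_{(q)}\geq \mt\widehat{C}_{(q)}$ at \emph{every} infinitely near point $q\in\sT^*$, which requires controlling the pullback of $j(f)$ through the whole resolution; the paper's shortcut is that for $g\in\Iea_{\text{fix}}$ the deformation $f+\eps g$ is trivial along the trivial section, hence straight es, hence $g\in I^s(C,z)$ by part (2). Second, the two dimension-two statements: asserting that forgetting the section contributes ``exactly $T_z\Sigma\cong\C^2$'' is precisely what has to be proved, namely that $f_x,f_y$ are linearly independent modulo $\Iea_{\text{fix}}$ (resp.\ that $\Ies=\Ies_{\text{fix}}+j(f)$ with $f_x,f_y$ independent modulo $\Ies_{\text{fix}}$); this is not formal -- it uses that $(C,z)$ is singular and fails for smooth germs -- and the paper cites \cite[Lemma I.2.13]{GLS18} for it. Relatedly, your two exact sequences are just the definitions of the quotients and do not by themselves give $\dim_\C\Ies_{\text{fix}}/\Iea_{\text{fix}}=\dim_\C\Ies/\Iea$; one needs to compare both quotients through the common middle term $\Ies/\Iea_{\text{fix}}$, as in the paper's two short exact sequences.
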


\begin{proof} (1) follows from the proof of Lemma \ref{1.4}, (2) from the proof of Lemma \ref{ln1.23} applied to $f + tg, t^2 = 0$.

(3) The inclusion  
$\Iea_{fix}(C,z)\subset I^s(C,z)$ follows form the fact that for 
$g \in \Iea_{fix}(C,z)$ the deformation
$f+ \eps g$ is trivial along the trivial section, hence straight, over $T_\eps$. The other inclusions follow from the definitions.
For the first dimension statements see \cite[Lemma I.2.13]{GLS18}. The second follows from
the first and the following exact sequences,
$$0  \to   \Ies_{fix}(C,z)/\Iea_{fix}(C,z) \to \Ies(C,z)/\Iea_{fix}(C,z)
  \to  \Ies(C,z)/\Ies_{fix}(C,z)  \to 0,$$
 $$0  \to   \Iea(C,z)/\Iea_{fix}(C,z) \to \Ies(C,z)/\Iea_{fix}(C,z)
  \to  \Ies(C,z)/\Iea(C,z)  \to 0.$$ 
The statement about the forgetful morphism follows, since both spaces have the same dimension and the cokernel is $\Ies(C,z)/j(f)+\Ies_{fix}(C,z) = 0$.
\end{proof}

\medskip

\begin{remark}\label{rm:tes}
If a deformation functor has a semiuniversal object, then the tangent space to the functor (see \cite[App. C]{GLS07}) coincides with the Zariski tangent space of the semiuniversal deformation.
We set
 $$ 
\begin{array}{cllll}
\tau^s(C,z)&:=& \dim_\C\ko_{\Sigma,z}/I^s(C,z),\\
\tes(C,z)&:=&  \dim_\C\ko_{\Sigma,z}/\Ies(C,z),\\
\tes_{fix}(C,z)&:=& \dim_\C\ko_{\Sigma,z}/\Ies_{\text{fix}}(C,z),\\
\tea(C,z)&:=& \dim_\C\ko_{\Sigma,z}/\Iea(C,z),\\
\tea_{fix}(C,z)&:=& \dim_\C\ko_{\Sigma,z}/\Iea_{\text{fix}}(C,z),
\end{array}
$$
with 
 $$ 
\begin{array}{cllll}
\tea(C,z) &=& \dim_\C \uDef_{(C,z)}(T_\eps) \  \text{(usual deformations)},\\
\tea(C,z)_{fix} &=& \dim_\C \uDef^{sec}_{(C,z)}(T_\eps) \   \text{(deformations with section)},\\ 
\tea(C,z)-\tes(C,z) &=& \dim_\C \uDef^{es}_{(C,z)}(T_\eps) \  \text{(es-deformations),}\\
\tea(C,z)-\tes_{fix}(C,z)& =& \dim_\C \uDef^{es,fix}_{(C,z)}(T_\eps)\   \text{(es-deformations with section),}\\
\tea(C,z)-\tau^s(C,z) &=& \dim_\C \uDef^{s}_{(C,z)}(T_\eps) \  \text{(straight es-deformations)}.
\end{array}
$$

By Corollary \ref{cor:Is}  we have 
$$\tes(C,z)-\tes_{fix}(C,z) = \tea(C,z)-\tea_{fix}(C,z) = 2.$$ 
Moreover, by Lemma \ref{ln1.25},
$$ \tau^s(C,z) = \sum_{q\in\sT^*(C,z)}\frac{m_q(m_q+1)}{2}\ ,$$
with $m_q$ the multiplicity of the strict transform of $(C,z)$ at $q$.
\end{remark}

Let us recall the following result about straight es-deformations from \cite[Proposition II.2.69]{GLS07}, basically due to Wahl \cite{Wahl}.

\begin{proposition} \label{pr.straight}
Let \mbox{$(C,z)\subset (\Sigma,z)$} be a reduced plane curve
singularity defined by $f\in\ko_{\Sigma,z}$.
Then the following are equivalent:
\begin{enumerate}\itemsep2pt
\item[(a)] There are \mbox{$\tau'=\tau(C,z)-\tes(C,z)$} elements
  \mbox{$g_1,\dots,g_{\tau'}\in \Ies(C,z)$}
  such that
$$
\varphi^{\text{\it es}}: V\left( f+  \sum\nolimits_i t_ig_i\right)
\subset(\Sigma\!\times\C^{\tau'}\!,(z,\bo)) \xrightarrow{\phantom{a}\pr\phantom{a}}
(\C^{\tau'}\!,\bo)
$$
is a semiuniversal equisingular deformation of $(C,z)$.
\item[(b)] There exist \mbox{$g_1,\dots,g_{\tau'}\in \Ies(C,z)$} inducing a basis of
$\Ies(C,z)/\langle f,j(f)\rangle$ such that
$$
\varphi^{\text{\it es}}:V\left( f+
  \sum\nolimits_i t_ig_i\right)
\subset(\Sigma\!\times\C^{\tau'}\!,(z,\bo)) \xrightarrow{\phantom{a}\pr\phantom{a}}
(\C^{\tau'}\!,\bo)
$$
is a semiuniversal equisingular deformation of $(C,z)$.
\item[(c)] Each locally trivial deformation of the reduced exceptional divisor
  $E$ of a minimal embedded resolution of \mbox{$(C,z)\subset
  (\Sigma,z)$} is trivial.
\item[(d)] $\Ies(C,z)=\langle f,j(f), I^s(C,z) \rangle$.
\item[(e)] Each equisingular deformation of \mbox{$(C,z)$} is straight equisingular.

\end{enumerate}
\end{proposition}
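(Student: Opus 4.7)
My plan is to establish the equivalence via two arcs: an algebraic arc connecting $(a)$, $(b)$, and $(d)$, and a geometric arc connecting $(d)$, $(e)$, and $(c)$.

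For $(a) \Leftrightarrow (b)$, I would use a Kodaira--Spencer argument. By Wahl's theorem (cf.\ \cite[II.2.63]{GLS07}), the semiuniversal es-base is smooth of dimension $\tau'=\dim_\C\Ies(C,z)/\Iea(C,z)$, so the linear family $V(f+\sum t_ig_i)$ with $g_i\in \Ies(C,z)$ is semiuniversal as an es-deformation iff its Kodaira--Spencer map $\partial_{t_i}\mapsto [g_i]$ is an isomorphism onto $\Ies(C,z)/\Iea(C,z)$, and this happens iff the classes $[g_i]$ form a basis.

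For $(d) \Leftrightarrow (b)$: the direction $(d) \Rightarrow (b)$ is direct -- lift a basis of $\Ies(C,z)/\Iea(C,z)$ to elements $g_1,\dots,g_{\tau'}\in I^s(C,z)$ and use the multi-parameter extension of Lemma \ref{ln1.23} (equivalently, the remark following Corollary \ref{cor:Is} saying that $f+\sum t_ig_i$ with $g_i\in I^s(C,z)$ is straight equisingular along the trivial section) to conclude that the linear family is (straight) equisingular over $(\C^{\tau'},\bo)$. The converse $(b) \Rightarrow (d)$ is subtler: from the semiuniversal linear family one modifies each $g_i$ by elements of $\Iea(C,z)$ (leaving the deformation class unchanged) in order to trivialize the unique tower of equimultiple sections; once those sections are trivial Lemma \ref{ln1.23} forces the modified representatives into $I^s(C,z)$, yielding $\Ies(C,z)=\Iea(C,z)+I^s(C,z)$.

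For $(d) \Leftrightarrow (e)$, I identify tangent spaces via Corollary \ref{cor:Is}: $\uDef^s_{(C,z)}(T_\eps)=I^s(C,z)/\Iea_{fix}(C,z)$ and $\uDef^{es}_{(C,z)}(T_\eps)=\Ies(C,z)/\Iea(C,z)$. The natural forgetful morphism from the straight es functor to the es functor has semiuniversal bases which are both smooth (the straight one by the construction in $(d)\Rightarrow(b)$), so it is an isomorphism iff its derivative is surjective, which translates to $\Ies(C,z)=\Iea(C,z)+I^s(C,z)$; this in turn is the statement that every es-deformation is induced from a straight es-deformation.

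The main obstacle is the geometric equivalence $(c) \Leftrightarrow (e)$. The reduced exceptional divisor $E=\bigcup E_i$ of the minimal embedded resolution is a tree of $\P^1$'s, each component $E_i$ carrying distinguished marked points (the nodes with other components $E_j$ together with the points where the strict transform of $C$ meets $E_i$). Since $\mathrm{PGL}_2$ acts $3$-transitively on $\P^1$, a nontrivial locally trivial deformation of $E$ exists exactly when some $E_i$ carries at least four marked points, and the resulting cross-ratio variation is precisely the obstruction to simultaneously trivializing the tower of equimultiple sections $\sigma_{(q)}$ in an es-deformation. Hence $(c)$ fails iff some component $E_i$ admits a nontrivial cross-ratio modulation iff some es-deformation fails to be straight, i.e., iff $(e)$ fails. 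Making the correspondence between $E$-moduli and the failure of trivializing $\sigma_{(q)}$ precise, in both directions, is the technical heart of the proof.
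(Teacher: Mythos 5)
The paper does not prove this proposition at all: it is quoted verbatim from \cite[Proposition II.2.69]{GLS07} and attributed to Wahl, so there is no internal proof to compare against. Judged on its own terms, your outline organizes the equivalences sensibly and gets the easy identifications right: $(b)\Rightarrow(a)$ is trivial, $(a)\Rightarrow(b)$ via the Kodaira--Spencer map and the smoothness of the $\mu$-constant stratum is fine, $(d)\Rightarrow(b)$ via lifting a basis into $I^s(C,z)$ and invoking the straightness of linear families with $g_i\in I^s(C,z)$ is the right argument, and $(d)\Leftrightarrow(e)$ via comparing $\langle j(f),I^s(C,z)\rangle/\langle f,j(f)\rangle$ with $\Ies(C,z)/\Iea(C,z)$ and using smoothness of both semiuniversal bases is sound.

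However, there are two genuine gaps. First, your argument for $(b)\Rightarrow(d)$ is circular: you propose to modify each $g_i$ by an element of $\Iea(C,z)$ ``in order to trivialize the unique tower of equimultiple sections,'' but the possibility of trivializing the sections $\sigma_{(q)}$ through the infinitely near points $q\neq z$ by an ambient isomorphism is exactly what is being characterized --- in general it fails (the cross-ratio obstruction), and elements of $j(f)$ only let you move the section through $z$ itself. You never explain what the hypothesis that the semiuniversal es-family is \emph{linear over the full germ} $(\C^{\tau'},\bo)$ actually buys you here; without that input the same ``modification'' argument would prove $\Ies(C,z)=\Iea(C,z)+I^s(C,z)$ unconditionally, which is false. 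Second, you correctly flag $(c)\Leftrightarrow(e)$ as the technical heart, but the cross-ratio heuristic is not a proof: one needs Wahl's identification of the obstruction space --- the cokernel of $I^s(C,z)+j(f)\to\Ies(C,z)$, equivalently the moduli of the tower of sections --- with the space of infinitesimal locally trivial deformations of the reduced exceptional divisor $E$ (an $H^1$ of a sheaf of vector fields on the resolution). That identification is precisely the content of the cited result of Wahl and of \cite[Proposition II.2.69]{GLS07}, and it is the one step your sketch does not supply in either direction.
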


Note that straight equisingular deformations  are deformations with section 
while equisingular deformations are deformations without section. Forgetting the section we get a morphism from the category $\Def^{s}_{(C,z)}$ of straight es-deformations to the category $\Def^{es}_{(C,z)}$ of es-deformations. Saying that 
an equisingular deformation is straight equisingular means that it is isomorphic 
(as deformation without section) to the image in $\Def^{es}_{(C,z)}$ of a straight equisingular deformation. 

The image $\Def^{ws}_{(C,z)}$ of $\Def^{s}_{(C,z)}$ in $\Def^{es}_{(C,z)}$
is called the category of {\em straight es-deformation without section}. 
It follows that
$$ \uDef^{ws}_{(C,z)}(T_\eps) = \langle j(f),I^s(C,z)\rangle \, / \, \langle f, j(f)\rangle$$
 is the tangent space to the functor of isomorphism classes of straight equisingular deformation of $(C,z)$ without section.
\medskip 

We mention also the following consequence for semiquasi-homogeneous and Newton nondegenerate singularities from \cite[ Proposition II.2.17 and Corollary II.2.71]{GLS07}.

\begin{corollary}\label{cor.nnd}
 Let $(C,z) \subset (\Sigma,z)$ be a reduced plane curve
singularity defined by $f\in \ko_{\Sigma,z}$, and let
\mbox{$\tau'=\tau(C,z)-\tes(C,z)$}.
\begin{enumerate}
\item[(a)] If \mbox{$f=f_0+f'$} is semiquasi-homogeneous with principal
  part $f_0$ being quasi-homogeneous of type $(w_1,w_2;d)$,  then 
  $$\Ies(C,z) =  \langle j(f), I^s(C,z)\rangle =  \langle j(f),\,  x^\alpha y^\beta \mid w_1 \alpha + w_2 \beta \geq d \rangle$$
  and a semiuniversal  equisingular deformation for $(C,z)$ is given by
$$
\varphi^{\text{\it es}}: V\left( f+  \sum\nolimits_{i=1}^{\tau'} t_ig_i\right)
\subset(\Sigma\!\times\C^{\tau'}\!,(z,\bo)) \xrightarrow{\phantom{a}\pr\phantom{a}}
(\C^{\tau'}\!,\bo)\,,
$$
for suitable \mbox{$g_1,\dots,g_{\tau'}$} representing a $\C$-basis for the quotient
$$\langle j(f),\,  x^\alpha y^\beta \mid w_1 \alpha
+ w_2 \beta \geq d \rangle\big/\ \langle f, j(f) \rangle\,.$$
\item[(b)]  If $f$ is Newton non-degenerate with Newton diagram
  $\Gamma(f)$ at the origin, then  
 \[\quad \quad \quad \ \ \Ies(C,z) =  \langle j(f), I^s(C,z)\rangle
 =  \langle j(f),\,  x^\alpha y^\beta \mid x^\alpha y^\beta
 \text{ has Newton order}\geq 1\rangle \]
and  a semiuniversal  equisingular deformation for $(C,z)$ is given by 
$$ 
\varphi^{\text{\it es}}: V\left( f+  \sum\nolimits_{i=1}^{\tau'} t_ig_i\right)
\subset(\Sigma\!\times\C^{\tau'}\!,(z,\bo)) \xrightarrow{\phantom{a}\pr\phantom{a}}
(\C^{\tau'}\!,\bo)\,,
$$
for suitable $g_1,\dots,g_{\tau'}$ representing a monomial $\C$-basis for the quotient
$$\langle j(f),\,  x^\alpha y^\beta \mid x^\alpha y^\beta
  \text{ has Newton order }\geq 1\rangle\big/\langle f, j(f)\rangle \,.$$
\end{enumerate}
Moreover, in both cases each equisingular deformation of \mbox{$(C,0)$} is
straight equisingular.

\end{corollary}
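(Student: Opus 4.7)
The plan is to derive the corollary by combining the equivalences of Proposition \ref{pr.straight} with an explicit identification of $I^s(C,z)$ as a monomial ideal in each of the two settings. First I would observe that, once we know
\[
\Ies(C,z)\;=\;\langle j(f),\,I^s(C,z)\rangle,
\]
condition (d) of Proposition \ref{pr.straight} holds, so by (e) of that proposition every equisingular deformation of $(C,z)$ is straight equisingular, and by (b) any $g_1,\dots,g_{\tau'}\in\Ies(C,z)$ whose classes form a $\C$-basis of $\Ies(C,z)/\langle f,j(f)\rangle$ yield a semiuniversal equisingular deformation of the stated form. Thus the corollary reduces to two tasks: (I) the above ideal equality, and (II) the explicit description of $I^s(C,z)$ as the indicated monomial ideal.

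For (I), the cleanest route is via condition (c) of Proposition \ref{pr.straight}: it suffices to show that each locally trivial deformation of the reduced exceptional divisor $E$ of a minimal embedded resolution of $(C,z)$ is trivial. In the semiquasi-homogeneous case the essential tree of $(C,z)$ agrees with that of the principal part $f_0$, and the minimal good embedded resolution of $f_0$ is the Hirzebruch--Jung resolution determined by the continued fraction expansion of $w_2/w_1$; each exceptional component is a smooth rational curve meeting its neighbours and the strict transform in a fixed, small number of marked points, so no cross-ratio moduli arise and $E$ is rigid. In the Newton non-degenerate case the analogous statement holds after passing to the regular refinement of the dual fan of $\Gamma(f)$: the exceptional divisor of the associated toric resolution is a tree of smooth rational curves whose incidence structure is rigidified by the fan, so again no cross-ratio moduli appear.

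For (II), I would apply the characterization from Lemma \ref{ln1.23}, namely $g\in I^s(C,z)$ iff $\mt\widehat{g}_{(q)}\geq \mt\widehat{C}_{(q)}$ for every $q\in\sT^*(C,z)$. Under the toric/weighted blow-ups resolving $f_0$, the total transform of a monomial $x^\alpha y^\beta$ at each essential point $q$ has multiplicity given by a linear functional in $(\alpha,\beta)$ determined by the weights attached to $q$, and similarly for $f_0$ itself with value $d$. Using the proximity equalities of Remark \ref{rem:proximityeq} to propagate these multiplicity inequalities up and down $\sT^*$, one reduces the collection of inequalities $\mt\widehat{x^\alpha y^\beta}_{(q)}\geq \mt\widehat{C}_{(q)}$ to the single condition $w_1\alpha+w_2\beta\geq d$ in case (a), and to the Newton-order condition in case (b). Combined with (I) this yields the stated monomial description of $\Ies(C,z)$.

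The main obstacle, I expect, is (I) in case (b): for a general Newton non-degenerate $f$ the exceptional divisor of the toric resolution can be a fairly intricate tree of rational curves, and a careful check is required to verify that no vertex carries four or more marked points that could produce cross-ratio moduli. Step (II) is essentially a combinatorial calculation with weighted blow-ups once the resolution is written down explicitly, and the assertions about the semiuniversal deformation and about straightness of every equisingular deformation of $(C,z)$ are then formal consequences of Proposition \ref{pr.straight}.
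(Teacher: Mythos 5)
Your overall skeleton (establish the ideal identity $\Ies(C,z)=\langle j(f),I^s(C,z)\rangle$, identify $I^s(C,z)$ with the monomial ideal, then conclude via Proposition \ref{pr.straight}) is the natural reconstruction; note that the paper itself gives no proof here but imports the statement from \cite[Proposition II.2.17, Corollary II.2.71]{GLS07}, where the identity $\Ies(C,z)=\langle j(f),\,x^\alpha y^\beta \mid \ldots\rangle$ is computed directly (essentially via constancy of the Newton/Milnor number for terms of Newton order $\geq 1$), i.e.\ via condition (d) rather than the geometric condition (c). However, there is a genuine error in your argument: you read Proposition \ref{pr.straight}(b) as saying that \emph{any} $g_1,\dots,g_{\tau'}$ inducing a basis of $\Ies(C,z)/\langle f,j(f)\rangle$ yields a semiuniversal equisingular deformation. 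That item only asserts that \emph{some} such basis exists, and the paper is at pains to refute the ``any basis'' reading: Remark \ref{rm:lin}(2) and the example $f=(y^3+x^7)(y^3+x^{10})$ exhibit a monomial $x^{10}y^2$ that is part of a monomial basis of $\Ies/\langle f,j(f)\rangle$ although $f+tx^{10}y^2$ is not equisingular. This is not a harmless slip, because the actual content of Corollary \ref{cor.nnd} is that a \emph{suitable} basis can be taken among classes of elements of $\langle j(f),\, x^\alpha y^\beta\mid w_1\alpha+w_2\beta\geq d\rangle$ (monomials of Newton order $\geq 1$ in case (b)); the bare existence statement (b) does not give this, and your only bridge to it is the false ``any basis'' claim. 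The repair is to choose the $g_i$ inside $I^s(C,z)$ itself (monomials there, once $I^s$ is known to be the monomial ideal): then the whole linear family $f+\sum_i t_ig_i$ is straight equisingular by Lemma \ref{ln1.23} and Proposition \ref{pr.straight.su}, and semiuniversality for the es-functor follows from (d)/(e) together with smoothness of the equisingular base, since the tangent image is all of $\langle j(f),I^s(C,z)\rangle/\langle f,j(f)\rangle=\Ies(C,z)/\langle f,j(f)\rangle$.

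The second gap is in step (I) for case (b), which you yourself flag as ``a careful check is required'': as written it is an expectation, not a proof, and it is the heart of the matter. Moreover condition (c) of Proposition \ref{pr.straight} concerns the reduced exceptional divisor of the \emph{minimal} embedded resolution, whereas the toric modification attached to a regular subdivision of the dual fan is in general strictly larger; saying its incidence structure is ``rigidified by the fan'' is not an argument. What is needed is, first, that nondegeneracy makes the toric modification a good embedded resolution (the strict transform is smooth and meets the exceptional chain transversally in distinct smooth points, one for each simple root of a face polynomial), and second, a bridge to the minimal resolution, e.g.\ that the toric resolution dominates the minimal one and that the number of points in which a fixed exceptional component meets the rest of $E$ cannot decrease under further blowups; since the toric exceptional divisor is a chain, every component of $E$ in the minimal resolution then carries at most two nodes of $E$ and no cross-ratio moduli occur, so (c) holds. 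Alternatively one can bypass (c) altogether and prove (d) directly, as in \cite{GLS07}. Similarly, your step (II) (reduction of the multiplicity inequalities at all $q\in\sT^*$ to the single weighted, resp.\ Newton-order, condition) is plausible and is the kind of computation carried out in \cite{GLS18}/\cite{GLS07}, but as proposed it is a plan rather than a verification, and in case (a) it tacitly uses that the essential points of $f$ coincide with those of $f_0$ in the given coordinates, which deserves a sentence of justification.
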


\begin{remark}\label{rm:lin}
(1) We can extend  the chosen basis of $\Ies(C,z)/\langle f,j(f)\rangle $ to a
basis $g_1,\dots,g_{\tau}$ of $\ko_{\Sigma,z}/\langle f,j(f)\rangle $, showing that
the base space of the semiuniversal es-deformation is the
linear subspace $\{t_{\tau'+1}=\cdots =t_{\tau} =0\}$ of the usual semiuniversal deformation of $(C,z)$ given by  $f+  \sum\nolimits_{i=1}^{\tau} t_ig_i$.

(2) Note that in  Proposition \ref{pr.straight}(b) and Corollary \ref{cor.nnd} not every (monomial) basis of $\Ies(C,z)/\langle f,j(f)\rangle $ has the claimed property (this was not clearly formulated in \cite{GLS07}). We illustrate this by an example (due to Marco Mendes), using {\sc Singular} 
\cite{DGPS}.
\end{remark}

\begin{example}
Let $f=(y^3+x^7)(y^3+x^{10})$, which is Newton-nondegenerate. The quotient
$ Q := \Ies(C,z)/\langle f,j(f)\rangle $ is the tangent space to the stratum of es-deformations  ($\mu$-constant stratum) of $f$, but in general not the stratum itself. We show that the monomial $x^{10}y^2$ is part of a monomial basis of $Q$.  It is, however, below the Newton diagram (the monomials on or above the Newton diagram are those of Newton order\footnote{We say that a monomial has Newton order  $d \in \R$  (w.r.t. $f)$ iff it corresponds to a point on the hypersurface  
 $d \cdot \Gamma(f) \subset \R^2$.}  $\geq 1$)  and we will see that  the deformation $f + tx^{10}y^2$ is not equisingular. At the end we construct a basis of monomials of $Q$ of Newton order $\geq 1$, explaining some of the peculiarities of {\sc Singular}.
\medskip

\noindent
\begin{verbatim}
LIB "all.lib";                //loads all libraries
ring r  = 0,(x,y),ds;         //local ring with degree ordering
poly f = (y3+x7)*(y3+x10);
list L = esIdeal(f,1);        //computes all 3 es-ideals:
                              //L[1]: = I^{es} = es-ideal of Wahl                                            
                              //L[2]: = I^{es}_fix                                                   
                              //L[3]: = I^s = top. singularity ideal 
ideal Ifix = std(L[2]);       //we continue with I^{es}_{fix}
Ifix;
//-> Ifix[1]=2xy5+x8y2+x11y2
//-> Ifix[2]=y6
//-> Ifix[3]=x5y4
//-> Ifix[4]=x7y3
//-> Ifix[5]=x10y2
//-> Ifix[6]=x14y
//-> Ifix[7]=x17
ideal tj = std(jacob(f)+f);   //the Tjurina ideal
ideal Ies = std(L[1]);        //this is I^{es}
NF(Ies,tj)+0;                 //the non-zero elements not in tj
//-> _[1]=x5y4
//-> _[2]=x10y2
//-> _[3]=x14y
\end{verbatim}
This shows that the monomial $x^{10}y^2$ is part of a monomial basis of $Q$. The deformation $f + tx^{10}y^2$  is not equisingular as we check by computing the Milnor numbers:

\noindent
\begin{verbatim}
milnor(f);
//-> 71
ring rt  = (0,t),(x,y),ds;       
poly ft = (y3+x7)*(y3+x10) + t*x10y2;
milnor(ft);
//-> 70
\end{verbatim}

\noindent To compute a monomial bases of $Q$ with Newton order $\geq 1$, we first construct a set of monomials containing the desired basis:
\noindent
\begin{verbatim}
setring r;    
int d = deg(highcorner(tj));  //determines the monomials inside tj
ideal k; int i;
for (i = 1; i <= d; i++)
{   k=k+maxideal(i);
}
\end{verbatim}
The ideal $k$ contains all monomials  of degree $\leq d$. The monomials of degree $ > d$  belong to  the ideal $tj$ (cf. \cite[Lemma 1.7.17]{GP07}).
\medskip

\noindent
To compute the monomials in $k$ on or above the Newton diagram, let $n1, n2$ be the affine linear polynomials defining the faces of the Newton diagram. 
We compute the Newton order of the monomials by computing their exponents and then take the minimum value of $n1$ and $n2$ at these exponents.
\begin{verbatim}
poly n1 = y + 3/10x - 51/10; 
poly n2 = y + 3/7x - 6;      
int e1,e2;
poly mi ;
ideal J;
for (i = 1; i<=size(k); i++)
{ e1 = leadexp(k[i])[1];
  e2 = leadexp(k[i])[2];
  mi = min(subst(n1,x,e1,y,e2),subst(n2,x,e1,y,e2));
  if (mi >=0)
  {J = J + k[i];}
}
\end{verbatim}
The elements of $J$ generate $Q$, all have  Newton order $\geq 1$. We can now of course get a basis out of $Q$ by homogenization and applying linear algebra methods.
The {\sc Singular} command  $\mathtt {reduce(J,tj);}$ creates a basis of $Q$, however, the reduction process via standard bases produces $x^{10}y^2$, which is a monomial of the generator $2y^5+x^7y^2+x^{10}y^2$ of $tj$, as part of this basis.  A good chance to get the right basis with  elements of Newton order $\geq 1$ is to use the reduction process in a ring with a $dp$ ordering, since this ordering prefers higher degrees (and works in our example).
\begin{verbatim}
ring R = 0, (x,y),dp;
ideal J = imap(r,J);
ideal tj = std(imap(r,tj));
ideal N = reduce(J, tj);
N = normalize(N+0);   N;        //just to get a nice-looking basis
//->  N[1]=x3y5
//->  N[2]=x4y5
//->  N[3]=x5y4
//->  N[4]=x5y5
//->  N[5]=x6y4
//->  N[6]=x14y
//->  N[7]=x15y
\end{verbatim}
The 7 elements of $N$ have all Newton order $\geq 1$. We check the dimension by computing $\dim_\C Q = \tau(f) - \tes(f) $.
\begin{verbatim}
setring r;
int tau = tjurina(f); tau;
//-> 59
int tes = vdim (Ies); tes;
//-> 52
tau - tes;
// -> 7
\end{verbatim}
\end{example}

We show now that the functor of straight es-deformations is in general a linear subfunctor of all es-deformations (with and without section).
\medskip

\begin{proposition} \label{pr.straight.su}
Let \mbox{$(C,z)\subset (\Sigma,z)$} be a reduced plane curve
singularity defined by $f\in\ko_{\Sigma,z}$.

(1) There exist \mbox{$g_1,\dots,g_{\tau'}\in I^s(C,z)$} inducing a basis of
$I^s(C,z)/\langle f, \fm_z j(f)\rangle$ such that
$$
\varphi^{\text{\it s}}:V\left( f+
  \sum\nolimits_i t_ig_i\right)
\subset(\Sigma\!\times\C^{\tau'}\!,(z,\bo)) \xrightarrow{\phantom{a}\pr\phantom{a}}
(\C^{\tau'}\!,\bo)\,,
$$
is a semiuniversal straight equisingular deformation of $(C,z)$. 

(2) There exist $g_1,\dots,g_{\tau''}\in \langle j(f), I^s(C,z)\rangle$ 
inducing a basis of of the quotient
$ \langle j(f),I^s(C,z)\rangle \, / \, \langle f, j(f)\rangle$ such that
$$
\varphi^{\text{\it s}}:V\left( f+
  \sum\nolimits_i t_ig_i\right)
\subset(\Sigma\!\times\C^{\tau''}\!,(z,\bo)) \xrightarrow{\phantom{a}\pr\phantom{a}}
(\C^{\tau'}\!,\bo)\,,
$$
is a semiuniversal straight equisingular deformation of $(C,z)$ without section.

\end{proposition}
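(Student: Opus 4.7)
The plan is to apply the infinitesimal criterion for semiuniversality: by construction the Kodaira--Spencer map of each candidate family is an isomorphism, so everything reduces to verifying (i) the family is a genuine straight es-deformation, and (ii) versality, which will follow from the \emph{linearity} of the straight es conditions.

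For (1), pick $g_1,\dots,g_{\tau'}\in I^s(C,z)$ lifting a $\C$-basis of $I^s/\Iea_{fix}$---this is the tangent space to $\uDef^s_{(C,z)}$ by Corollary \ref{cor:Is}(2)---and set $F = f + \sum t_i g_i$ over $(\C^{\tau'},\bo)$. Since each $g_i\in I^s$ satisfies $\mt\widehat{g}_{i,(q)}\geq \widehat{m}_{(q)}$ for every essential $q\in\sT^*$, and these inequalities are preserved under $\C[t_1,\ldots,t_{\tau'}]$-linear combinations, the total transform $\widehat F_{(q)}=\widehat f_{(q)}+\sum t_i \widehat g_{i,(q)}$ lies in $\fm_{(q)}^{\widehat m_{(q)}}\cdot\ko_{\Sigma_{(q)}\times\C^{\tau'}}$ at every $q$---this is the multiparameter version of the characterization in Lemma \ref{ln1.23}, verifying (i).

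For (ii), extend $g_1,\ldots,g_{\tau'}$ to a basis $g_1,\ldots,g_N$ of $\ko_{\Sigma,z}/\Iea_{fix}$, arranging that $g_{\tau'+1},\ldots,g_N$ lift a basis of $\ko_{\Sigma,z}/I^s$. Then $\varphi^{sec}:V(f+\sum_{i=1}^N s_ig_i)\to(\C^N,\bo)$ is the semiuniversal deformation with section (\cite[Chap.~II]{GLS07}). Given any straight es-deformation $\psi$ over $(T,0)$ with trivial section, versality of $\varphi^{sec}$ yields $\Phi:(T,0)\to(\C^N,\bo)$ with $\psi$ defined by $f+\sum_i\Phi^*(s_i)g_i$. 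The straight es condition forces $\sum_i \Phi^*(s_i)\widehat g_{i,(q)}\in\fm_{(q)}^{\widehat m_{(q)}}\cdot\ko_{\Sigma_{(q)}\times T}$ at each $q\in\sT^*$. Since the evaluation map $\ko_{\Sigma,z}/\Iea_{fix}\to\bigoplus_{q\in\sT^*}\ko_{\Sigma_{(q)}}/\fm_{(q)}^{\widehat m_{(q)}}$ has kernel exactly $I^s/\Iea_{fix}$ (by Definition \ref{def:topsing}), the images of $g_{\tau'+1},\ldots,g_N$ are $\C$-linearly independent in the direct sum. Hence $\Phi^*(s_i)=0$ in $\ko_T$ for $i>\tau'$, so $\Phi$ factors through $\C^{\tau'}\hookrightarrow\C^N$ and the restriction of $\varphi^{sec}$ there is $\varphi^s$, establishing semiuniversality.

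Part (2) follows by an analogous strategy using the tangent-space identification $\uDef^{ws}_{(C,z)}(T_\eps)=\langle j(f),I^s(C,z)\rangle/\langle f,j(f)\rangle$ recalled in the paragraph preceding the proposition; one lifts a basis by $g_1,\ldots,g_{\tau''}\in\langle j(f),I^s\rangle$ and plays the same game with the semiuniversal es-deformation without section $\varphi^{es}$ of Proposition \ref{pr.straight} in place of $\varphi^{sec}$. The main obstacle throughout is the versality step---specifically the linear-independence claim for the residues $\widehat g_{i,(q)}\bmod\fm_{(q)}^{\widehat m_{(q)}}$, which expresses that the straight es subfunctor of $\uDef^{sec}_{(C,z)}$ (respectively $\uDef^{es}_{(C,z)}$) is cut out by \emph{linear} equations in the chosen basis, and this is what ultimately forces $\Phi$ to factor through the linear subspace $\C^{\tau'}$ (respectively $\C^{\tau''}$) of the base of the reference deformation.
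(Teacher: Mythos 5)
Your part (1) is essentially the paper's argument: the family $f+\sum_i t_ig_i$ is straight equisingular because the defining conditions $\mt\widehat{g}_{(q)}\geq \mt\widehat{C}_{(q)}$ are linear (Lemma \ref{ln1.23}), and the base is then exhibited as a linear subspace of the base of the semiuniversal deformation with section, through which the classifying map of any straight es-deformation must factor; your evaluation-map computation is just a more explicit rendering of the paper's phrase ``must factor through $B^s_{C,z}$ by definition of $I^s$''. Two caveats. First, what your factoring argument delivers is completeness of $\varphi^s$; semiuniversality also requires versality, i.e. the lifting property over small Artinian extensions $(T,0)\subset (T',0)$, which the paper settles with a separate sentence (equimultiple deformations along the trivial sections extend over $(T',0)$) and which your closing ``establishing semiuniversality'' skips. (Both you and the paper pass over the point that versality of $\varphi^{sec}$ produces the induced family only up to isomorphism in $\Def^{sec}_{(C,z)}$, so the trivial-section equimultiplicity has to be transferred to $f+\sum_i\Phi^*(s_i)g_i$; since the paper argues at the same level of detail, I do not count that against you.)

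Part (2), as you state it, has a genuine gap: you propose to run the same argument against ``the semiuniversal es-deformation without section $\varphi^{es}$ of Proposition \ref{pr.straight}'', but the linear family appearing there is one of the \emph{equivalent conditions} (a)--(e) of that proposition, i.e. it exists precisely when every es-deformation of $(C,z)$ is already straight; in general no semiuniversal es-deformation of the form $f+\sum_i t_ig_i$ is available, and without that linear form your evaluation-map argument has nothing to act on. The paper instead deduces (2) from (1): it uses that $\langle j(f),I^s(C,z)\rangle/\langle f,j(f)\rangle$ is the tangent space to straight es-deformations without section and, as in Remark \ref{rm:lin}, realizes the base as a linear subspace of the base of the \emph{usual} semiuniversal deformation of $(C,z)$, which always has the linear form, the Jacobian directions accounting for the coordinate changes that appear once the section is forgotten. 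Rerunning your factoring argument inside that reference family repairs the step.
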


\begin{proof}
(1) That the deformation $\varphi^s$ is straight es follows from Lemma \ref{ln1.23} and its proof.  Extending  the chosen basis of $I^s(C,z)/\langle f, \fm_zj(f)\rangle$ to a
basis of $\ko_{\Sigma,z}/\langle f, \fm_zj(f)\rangle$
shows that the base space of $\varphi^s$
 is a linear subspace $B^s_{C,z}$ of the base space $B^{fix}_{C,z}$ of the  semiuniversal deformation of $(C,z)$ with (trivial) section.
If a given deformation over an arbitrary base germ $(T,0)$ is straight es, it can be induced by a map $\varphi : (T,0) \to B^{fix}_{C,z}$. $\varphi$ \  must factor through $B^s_{C,z}$
by definition of $I^s$ because otherwise the deformation is not equimultiple along the trivial sections through all infinitely near points $q \in \kt^*(C,z)$. This shows that
$\varphi^{\text{\it s}}$ is complete.
Moreover, for extensions of Artinian base spaces  $(T,0) \subset (T',0)$, the equimultiple deformations along the trivial sections over  $(T,0)$ can
can be clearly extended to equimultiple deformations along the trivial sections over
$(T',0)$. Hence the versality follows.
(For ``complete'' and ``versal'' see \cite [Definition II.1.8]{GLS07}). 

(2) follows from (1), noting that  $ \langle j(f),I^s(C,z)\rangle \, / \, \langle f, j(f)\rangle$ is the tangent space to the functor of isomorphism classes of straight equisingular deformation of $(C,z)$ without section. As in Remark  \ref{rm:lin}
the base space of the semiuniversal straight equisingular deformation of $(C,z)$ without section can be realized as a linear subspace of the usual semiuniversal   deformation of $(C,z)$.
\end{proof}

\section {Relation to the Rooted Hilbert Scheme}
In this section we show that straight es-deformations of a plane curve singularity 
$(C,z) \subset (\Sigma,z)$ appear as fibres of a morphism from all es-deformations to the ``rooted Hilbert scheme'', a certain punctual Hilbert scheme in $\Sigma$ parametrizing topological singularity schemes of plane curve singularities with fixed cluster graph. 
Recall that the Hilbert scheme of points on $\Sigma$ parametrizes families of zero-dimensional schemes,
 i.e., complex subspaces of $\Sigma$ concentrated on finitely many points.

\begin{definition}
Let $T$ be an arbitrary complex space. A {\em family
of zero-dimensional schemes (of degree $n$) on $\Sigma$
over $T$\/} is a commutative diagram of complex spaces
$$
\SelectTips{cm}{12}
\xymatrix@C=6pt@R=10pt{ \kz \:\:\ar@{^{(}->}[rr]^-{j}
 \ar[rd]_\varphi
 & & \:\Sigma \times T \ar[dl]^{\pr}\\
&  T }
$$
with  $j : \kz \hookrightarrow \Sigma \times T$ a closed embedding, $\pr$
the projection and $\varphi = \pr \circ j$ finite and flat.
The fibre
$\kz_t:=\varphi^{-1}(t)$\index{$zzt$@$\sZ_t$}, $t \in T$, is mapped by $j$ to a
zero-dimensional subscheme of $\Sigma$ and if its degree is $n$ we say that
$j : \kz \hookrightarrow \Sigma \times T$ is a {\em family of zero-dimensional schemes of degree n}.
\end{definition}

We usually identify two families $j : \kz \hookrightarrow \Sigma \times T$  and $j' : \kz ' \hookrightarrow \Sigma \times T$ if the images $j( \kz)$ and $j'(\kz ')$ coincide, and then we get subspaces $\kz \subset\Sigma \times T$ (of degree $n$) which are finite and flat over $T$. For $\Sigma$ algebraic and algebraic families this is the Hilbert functor:

The {\em Hilbert functor } $\Hilb^n_{\Sigma}$ associates
to an algebraic variety $T$ the set
\begin{eqnarray*}
 \Hilb^n_{\Sigma}(T) &:=& \biggl\{
\begin{array}{c}
 \mbox{closed algebraic subvarieties } \kz \subset\Sigma \times T \\
  \mbox{which are finite of degree } n \mbox{ and flat over } T
\end{array}
\biggr\}.
 \end{eqnarray*}
 
Hence $ \kz \subset\Sigma \times T \to T$ is a family of zero-dimensional schemes of degree n.
If $\Sigma$ is a projective surface, the Hilbert functor is known to be representable by a  smooth projective algebraic variety of dimension $2n$, denoted by \mbox{$\tHilb^n_\Sigma$}\index{$hilbnSigma$@$\tHilb^n_\Sigma$}, the
{\em Hilbert scheme\/}\index{Hilbert!scheme!of points} {\em of points in} $\Sigma$.
Representability means that there exists a closed algebraic subvariety \mbox{$\sU^n_{\Sigma} \subset  \:\Sigma \times \tHilb^n_{\Sigma}$}, finite of degree $n$ and flat over $\tHilb^n_{\Sigma}$, such that each element of  \mbox{$\Hilb^n_{\Sigma} (T)$} can be induced from the universal family \mbox{$\sU^n_{\Sigma} \to \tHilb^n_{\Sigma}$}
via base change by a unique algebraic morphism \mbox{$T \to  \tHilb^n_\Sigma$}.
\medskip

There exists a birational
morphism, the {\em Hilbert-Chow morphism}\index{Hilbert-Chow morphism}
$$ \Phi: \tHilb^n_\Sigma \longrightarrow \text{Sym}^n \Sigma \,,
$$ 
$\text{Sym}^n \Sigma$ denoting the $n$-th symmetric power of $\Sigma$.
It assigns to a closed subscheme \mbox{$Z\subset
  \Sigma$} of degree $n$ the $0$-cycle consisting of the points of $Z$
with multiplicities given by the length of their local rings on $Z$. If $Z$ consists of $n$ distinct simple
points $z_1,...,z_n$ then $\Phi$ maps $Z$ to the image of $(z_1,...,z_n)
\in\Sigma^n$ in $\Sym^n\Sigma=\Sigma^n/S_n$ where $S_n$ is the symmetric group permuting the coordinates. 
\medskip

\begin{definition}{\bf (Punctual Hilbert functor and scheme)}
\label{defPunctHilb}
Fix \mbox{$z \in \Sigma$} and let $x,y$ be local coordinates at $z$.
Then we have the {\em punctual Hilbert functor}
$\Hilb^n_{\C\{x,y\}}$,
which associates to a complex space $T$ the set
$$ \Hilb^n_{\C\{x,y\}}(T) := \bigl\{ \!\; (\kz \subset
\Sigma\times T) \in \Hilb^n_{\Sigma}(T) \:\big|\:
\text{supp}(\kz) \subset \{z\}\times T \!\;\bigr\}.$$
\end{definition}

Fogarty \cite{Fogarty} and Hartshorne \cite{Hartshorne} showed that the
punctual Hilbert functor is representable (in the algebraic category)
by a connected projective variety
\mbox{$\tHilb^n_{\C\{x,y\}}$},
the {\em punctual Hilbert scheme\/},
which can be identified with the closed subvariety
\mbox{$\Phi^{-1}(n\cdot z) \subset \tHilb^n_\Sigma$}.
Brian\c{c}on \cite{Briancon} proved (in the analytic categroy) that
$\tHilb^n_{\C\{x,y\}}$ is irreducible
(in general not reduced) of dimension $n-1$. 

\begin{remark} \label{douady}
A functor analogous to the Hilbert functor can be defined for complex spaces $T$, associating to $T$ the set of closed complex subspaces $\kz \subset\Sigma \times T$, finite of degree $n$ and flat over $T$. This is the {\em Douady functor}, denoted by 
$\Dou^n_{\Sigma}$, which is representable by a complex space $\tDou^n_{\Sigma}$, the {\em Douady space}. It has a universal family \mbox{$\sV^n_{\Sigma} \to \tDou^n_{\Sigma}$} with the same universal property as the Hilbert functor but for morphisms of complex spaces, i.e.  each element of  \mbox{$\Dou^n_{\Sigma} (T)$} can be induced from \mbox{$\sV^n_{\Sigma} \to \tDou^n_{\Sigma}$} by a unique 
analytic morphism \mbox{$T \to  \tDou^n_\Sigma$}.

The Hilbert scheme and the Douady space exist in much greater generality and their relation is discussed in detail in \cite[Section II.2.2.1]{GLS18}. We just mention that  for families of zero-dimensional schemes on the smooth projective 
surface $\Sigma$, the analytification of
$\tHilb^n_{\Sigma}$ and $\tDou^n_{\Sigma}$ are isomorphic as complex spaces 
and that $\sU^n_{\Sigma} \subset  \:\Sigma \times \tHilb^n_{\Sigma}$ has 
the universal property for complex spaces $T$ and analytic morphisms $T \to \tHilb^n_{\Sigma}$
(proved in greater generality in \cite[Proposition II.2.28]{GLS18}).

We will consider the punctual Hilbert functor as above with $z$ a point in an
arbitrary smooth complex analytic surface $\Sigma$. Since we consider families 
 \mbox{$(\kz \subset \Sigma\times T)$} $\in \Hilb^n_{\Sigma}(T)$ with  
 $\text{supp}(\kz) \subset \{z\}\times T$, we may assume that $\Sigma$ is projective and
the above mentioned results hold in the algebraic as well as in the analytic category and we will always write $\tHilb$, also in the analytic situation.
\end{remark}

In order to define the ``rooted Hilbert functor'', a subfunctor of the punctual Hilbert functor,  we have to extend the definition of a cluster graph to arbitrary zero-dimensional subschemes of $Z \subset \Sigma$. We just sketch the definition and refer to \cite{GLS18} for details.

We set $\deg Z =  \dim_\C H^0(Z,\ko_Z) $, the degree of $Z$,  
and $\mt(Z,z)$ the minimum order at $z$ of the elements contained in the ideal $I_Z$ defining $Z$, the multiplicity of $Z$. 
Given a zero-dimensional scheme
\mbox{$Z_{0}:=Z\subset \Sigma=:\Sigma_{0}$}, let
\mbox{$\pi_{i}:\Sigma_{i}\to \Sigma_{i-1}$} be the blowing up of
\mbox{$\text{supp}(Z_{i-1}) \subset \Sigma_{i-1}$}, and let $Z_{i}$ be the
strict transform
of \mbox{$Z_{i-1}$},
\mbox{$i=1,\dots,r$}. Note that \mbox{$\text{supp}(Z_{i})$} consists of
finitely many infinitely near points (of level $i$) and that
  \mbox{$\deg Z_{i}$} is strictly decreasing (which can be seen, for
  instance, by using standard bases).
 We choose
$r$ minimal with the property \mbox{$\text{supp}(Z_{r})=\emptyset$}.
Then the (isomorphism class of the) sequence 
$$ \sK:=\bigl(\text{supp}(Z_{0}),\pi_1,\text{supp}(Z_{1}),\dots,\pi_{r-1},
\text{supp}(Z_{r-1})\bigr) $$ 
is called a constellation on $\Sigma$ (generalizing constellations for plane curve singularities defined in Section 1).
Setting 
$$m_q:=\mt(Z_{i},q)\quad\text{for each}\quad q\in \text{supp}(Z_{i})\ ,$$ 
and $\bm = (m_q)_q$ then  \mbox{$\Cl(Z):= (\sK,{\bm})$} is
called the {\em cluster defined by the zero-di\-men\-sional
  scheme} $Z$. Similar as in Definition \ref{def:tree} we define the
  graph $\Gamma_\sK$ of $\sK$ and a proximity relation
  ($q_j\dashrightarrow q_i$) on the points of $\sK$ and call the triple 
$$\clg(Z):=\clg(\Cl(Z)) := (\Gamma_{\sK},\dashrightarrow, {\bm})$$
the {\em cluster graph of the zero-dimensional scheme $Z$.}  
We set $n\,:=\: \sum_{q \in \sK} \frac{m_q(m_q \!\!\:+\!\!\; 1)}{2}\,.$
The support of $Z$ is called the {\em root of } $\G$.\\

Let $\!\;\G$ be the cluster graph of a zero-dimensional
scheme on $\Sigma$.
We define the {\em Hilbert functor with fixed cluster
graph $\!\;\G$} on the category of reduced complex spaces as 
$$ \Hilb^{\G}_{\Sigma}(T)= 
\bigl\{ (\kz \subset  \Sigma\times T)\in \Hilb^n_{\Sigma}(T)
  \,\big|\:\clg(\kz_t)=\G \text{ for all } t\in T \bigr\}.$$

It is proved in \cite[Proposition I.1.52]{GLS18} that for 
$(\kz \subset  \Sigma\times T)\in \Hilb^{\G}_{\Sigma}(T)$
there exists a complex space $T'$  and a finite surjective morphism 
$\alpha:T'\to T$
such that the induced family $\alpha^* \kz \to T'$ is resolvable by 
a sequence of blowing ups 
$\pi_i:\sX^{(i+1)} \!\to \sX^{(i)}$
along equimultiple sections $\sigma^{(i)}_j:T'\to \sX^{(i)}$  ($i = 0, \cdots, N, j = 0, \cdots k_i$) with $\sX^{(0)}=\Sigma\times T, \kz^{(0)}\!=\kz$  and
$\kz^{(i+1)}$ the strict transform of $\kz^{(i)}$.
Moreover, $\text{supp}( \kz^{(i)}) = \bigcup_{j=1}^{k_i}\sigma^{(i)}_j(T)$ and
$\text{supp}(\kz^{(N+1)}) =\emptyset$.
The initial sections $\sigma^{(0)}_j$ satisfy 
$\{ \sigma^{(0)}_1(t),\cdots,\sigma^{(0)}_{k_1}(t)\}=$ root$(\G(\kz_t))$.
The rooted Hilbert functor is the subfunctor of $ \Hilb^{\G}_{\Sigma}$ where the
initial sections are trivial.\\

Consider now a reduced plane curve singularity $(C,z) \subset (\Sigma,z)$ with
essential tree $\sT^*$ and  cluster graph
$\G(C,z) = (\Gamma_{\sT^\ast},\dashrightarrow, {\bm})$, defined in Section 1. 
The definition of the 
cluster graph of a zero-dimensional scheme is modeled in such a way that we have
$$\G(C,z) = \clg(Z^s(C,z)),$$
where $Z^s(C,z)$ is the topological singularity scheme from 
Definition \ref{def:topsing}. The root of $\G(C,z)$ is \{z\}.

\begin{definition}{\bf (Rooted Hilbert functor)} \label{def:hilbroot}
Let $\G = \G(C,z)$ be the cluster graph of a reduced plane curve singularity. 
The {\em rooted Hilbert functor} or the {\em punctual Hilbert functor fixing $\G$}
is the subfunctor $ \Hilb^{\G}_{\C\{x,y\}} $ of  $ \Hilb^{\G}_{\Sigma}$ associating to
a reduced complex space $T$ the subspaces  
$(\kz \subset  \Sigma\times T) \in \Hilb^\G_{\Sigma}(T)$ 
such that the initial section $\sigma:=\sigma^{(0)}_1$ passing through $z$ is trivial.
\end{definition}

The main result about the rooted Hilbert functor is the following theorem. It is used in \cite[Section IV.6]{GLS18} to prove asymptotically sufficient conditions for the irreducibility of the variety of plane projective curves of given degree with a fixed number of singularities of given topological type.

\begin{theorem} {\bf (\cite[Theorem I.1.55, I.1.57]{GLS18})}\label{thm:h0}
The rooted functor $\Hilb^{\G}_{\C\{x,y\}}$ is
representable by  $\tHilb^{\G}_{\C\{x,y\}}$, the {\normalfont rooted Hilbert scheme}, an irreducible, quasi-projective subvariety
of the projective variety $\tHilb^n_{\C\{x,y\}}$,
of dimension equal to the number of free vertices
in $\sT^*(C,z) \setminus\{z\}$.
\end{theorem}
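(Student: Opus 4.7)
The plan is to construct $\tHilb^\G_{\C\{x,y\}}$ directly as an iterated open $\mathbb{P}^1$-bundle with one $\mathbb{P}^1$-factor for each free vertex of $\sT^*\setminus\{z\}$, and to verify via the cluster-ideal construction that this variety represents the rooted Hilbert functor.

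I would build a parameter space $P$ inductively on the level $\ell$ of the essential tree. At $\ell=0$ the root is pinned to $z\in\Sigma$, contributing no parameter, so $P_0=\{\mathrm{pt}\}$. Assume $P_\ell$ has been constructed together with the $\ell$-fold relative blow-up of $\Sigma\times P_\ell$ along the already selected equimultiple sections. The cluster graph $\G$ prescribes, for each level-$\ell$ essential point $p$, how many level-$(\ell+1)$ essential points are proximate to $p$ and whether each is free or a satellite: satellite points are forced to be the intersection of $E_p$ with the strict transform of the relevant ancestral exceptional divisor, hence are uniquely determined by the earlier sections and contribute no new parameter; each free point above $p$ moves in $E_p\cong\mathbb{P}^1$ subject only to being distinct from the already-placed satellites and from the other free points above $p$. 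Globalizing over $P_\ell$ realizes $P_{\ell+1}$ as a Zariski-open subset of an iterated $\mathbb{P}^1$-bundle over $P_\ell$. Taking $P$ to be the variety at the top level yields an irreducible quasi-projective variety of dimension $N$, equal to the number of free vertices in $\sT^*\setminus\{z\}$.

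Next I would define a natural transformation $\Psi$ from the functor of points of $P$ into $\Hilb^\G_{\C\{x,y\}}$: a $T$-point of $P$ consists of compatible equimultiple sections which, via the cluster-ideal construction (Remark~\ref{rm1.30.1} applied relatively over $T$), cut out a closed subscheme $\kz\subset\Sigma\times T$ finite over $T$ of constant degree $n=\sum_{q\in\sT^*} m_q(m_q+1)/2$ by Lemma~\ref{ln1.25}, hence flat, with $\clg(\kz_t)=\G$ for every $t\in T$. Conversely, given a family in $\Hilb^\G_{\C\{x,y\}}(T)$, the relative resolution procedure recalled just before Definition~\ref{def:hilbroot} yields equimultiple sections after at most a finite base change; in the rooted case, however, the initial section is trivially $\{z\}\times T$, and by induction on levels, using the constancy of $\G$ along $T$, one checks that at each step the support of the next strict transform splits into the images of algebraic sections of the intermediate blow-up (satellite components forced by the previous data, free components mapping to the corresponding $\mathbb{P}^1$-bundle factors of $P$), so no base change is needed. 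This makes $\Psi$ an isomorphism of functors on reduced complex spaces, and we set $\tHilb^\G_{\C\{x,y\}}:=P$. The locally closed embedding into $\tHilb^n_{\C\{x,y\}}$ then follows because the condition $\clg(Z)=\G$ is locally closed: the multiplicities of the iterated strict transforms are upper semicontinuous in families, so the stratum where all these multiplicities attain their prescribed values is locally closed.

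The main obstacle I expect is precisely the inverse construction: producing the equimultiple sections over $T$ directly, without passing to a finite base change. This requires an induction on levels showing that each relative strict transform is flat with support splitting into a disjoint union of algebraic sections, a splitting forced by the constancy of $\G$ along $T$ and anchored at level $0$ by the rooted hypothesis. Once this is in place, irreducibility and the dimension count follow immediately from the construction of $P$ as an iterated open $\mathbb{P}^1$-bundle with exactly one $\mathbb{P}^1$-fibre per free non-root vertex of $\sT^*$.
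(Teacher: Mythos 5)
The paper itself contains no proof of this theorem: it is quoted verbatim from \cite[Theorems I.1.55, I.1.57]{GLS18}, so there is nothing in-paper to compare with; judged on its own terms, your proposal has a genuine gap, and it sits exactly at the step you flag as the main obstacle. The rooted hypothesis only trivializes the level-$0$ section; it does not prevent monodromy among the free points at higher levels when the cluster graph $\G$ has nontrivial automorphisms (two free vertices over the same parent carrying isomorphic decorated subtrees). Concretely, let $C_s=V\bigl((y^2-sx^2)^2-2x^3(y^2+sx^2)+x^6\bigr)$ for $s\in\C^*$: two cusps at the origin with tangent directions $y=\pm\sqrt{s}\,x$. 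The induced family of topological singularity schemes $\kz=\{Z^s(C_s,0)\}_{s\in\C^*}$ is a $\C^*$-point of the rooted functor $\Hilb^{\G}_{\C\{x,y\}}$ (support equal to $\{z\}\times\C^*$, cluster graph constant), but after blowing up the trivial section the level-$1$ support is the irreducible double cover $\{(\pm\sqrt{s},s)\}$ of $\C^*$, which does not split into sections; the family therefore does not lift to your labeled parameter space $P$, and a finite base change really is needed, exactly as in \cite[Proposition I.1.52]{GLS18}. Consequently $\Psi$ is not an isomorphism of functors, and $P$ cannot be taken as $\tHilb^{\G}_{\C\{x,y\}}$: whenever $\G$ has such automorphisms, two distinct points of $P$ (related by relabeling the symmetric free vertices) define the same cluster ideal, so the natural map $P\to\tHilb^n_{\C\{x,y\}}$ is not injective. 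The correct representing object is, roughly, the image of $P$ under this map (a quotient by the relabelings), and the actual content of the theorem --- that this image, as a subvariety of $\tHilb^n_{\C\{x,y\}}$, represents the rooted functor on reduced complex spaces --- is precisely what your argument does not establish.

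Parts of your construction do survive: since $P\to\tHilb^n_{\C\{x,y\}}$ is finite onto its image, irreducibility and the dimension count (one parameter per free vertex of $\sT^*\setminus\{z\}$, satellites forced, the root pinned at $z$) follow from your iterated $\mathbb{P}^1$-bundle description of $P$; and deducing flatness of the relative cluster-scheme family from constancy of the degree (Lemma \ref{ln1.25}) is legitimate because the functor is only defined on reduced bases. But besides the monodromy issue, note also that local closedness of the condition $\clg(\kz_t)=\G$ inside $\tHilb^n_{\C\{x,y\}}$ needs more than upper semicontinuity of multiplicities, since the centers of the iterated blow-ups themselves vary with $t$; both points are part of what \cite{GLS18} has to establish and would have to be supplied to complete your approach.
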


Let $S$ denote the topological type of $(C,z)$. Since
$\G =\G(C,z)$ is a complete invariant of $S$, $ \Hilb^{\G}_{\C\{x,y\}} $ 
depends only on $S$ and
we can introduce the {\em punctual Hilbert scheme associated to a 
topological type S},
$$ \kh_0^s(S) := \tHilb^{\G}_{\C\{x,y\}}\,. $$

It is shown in \cite[Proposition I.1.17]{GLS18} that each point in $ \kh_0^s(S)$ corresponds to a topological singularity 
scheme $Z^s(C,w)$  of a plane curve singularity $(C,w)$ 
of topological type $S$ and with
$\deg Z^s(C,w) = \dim_\C \ko_{\Sigma,z}/ I^s(C,w) = n$, with 
$n = \sum_{q\in \sT^*(C,w)} \frac{m_q(m_q\!\!\:+\!\!\;1)}{2}$,
$m_q$ the multiplicity of the strict transform of $(C,w)$ at $q$,
and with fixed cluster graph 
$\G =\clg(Z^s(C,w))$.\\

Before we relate $\kh^s_0(S)$ to straight equisingular deformations, we consider
several related semiuniversal base spaces.
\medskip

\begin{lemma}\label{lem:bases}
For a reduced plane curve singularity $(C,z)$ the following base spaces of semiuniversal deformations of $(C,z)$ are smooth of the given dimension.
\begin{itemize}
\item [(1)] $B_{C,z}$  the base space of the (usual) semiuniversal deformation\\
of dimension $\tea(C,z) = \tau(C,z)$, \index{$tau$@$\tau(C,z)$}
\item  [(2)] $B^{fix}_{C,z}$  the base space of the semiuniversal deformation with section\\
of dimension $\tea_{fix}(C,z) = \tau_{fix}(C,z)$,\index{$taueaf$@$\teaf$}
\item  [(3)] $B^{es}_{C,z}$  the base space of the seminuniversal equisingular deformation\\
of dimension $\tea(C,z)-\tes(C,z)$,\index{$taues$@$\tes$}
\item  [(4)] $B^{es,fix}_{C,z}$  the base space of the seminuniversal es-deformation with section\\
of dimension $\tea_{fix}(C,z) - \tes_{fix}(C,z) = \tea(C,z) - \tes(C,z)$,\index{$tauesf$@$\tesf$}
\item  [(5)] $B^{s}_{C,z}$  the base space of the seminuniversal straight es-deformation\\
of dimension $\tea_{fix}(C,z)-\tau^s(C,z)$. \\
\end{itemize}
\end{lemma}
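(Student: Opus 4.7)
The plan is to handle (1) and (2) by the classical unobstructedness of hypersurface deformations, then derive (3) and (4) from Wahl's smoothness theorem for the equisingularity stratum, and finally read off (5) from Proposition \ref{pr.straight.su}.

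First I would recall that a reduced plane curve singularity $(C,z)$ is a hypersurface germ, hence an isolated complete intersection singularity. Its deformation functor $\uDef_{(C,z)}$ (resp.\ its sectioned version $\uDef^{sec}_{(C,z)}$) is therefore unobstructed, so the semiuniversal base $B_{C,z}$ (resp.\ $B^{fix}_{C,z}$) is smooth of dimension equal to its tangent space. By Corollary \ref{cor:Is} and Remark \ref{rm:tes}, these tangent spaces are $\ko_{\Sigma,z}/\Iea(C,z)$ and $\ko_{\Sigma,z}/\Iea_{fix}(C,z)$, which proves (1) and (2).

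For (3) I would invoke Wahl's theorem, which asserts that the equisingularity locus in the base of the semiuniversal deformation of a reduced plane curve singularity is smooth; the germ of this locus is by definition $B^{es}_{C,z}$, and its tangent space is $\Ies(C,z)/\Iea(C,z)$ by Corollary \ref{cor:Is}, giving dimension $\tea(C,z)-\tes(C,z)$. Case (4) is completely analogous: $B^{es,fix}_{C,z}$ is the germ of the (smooth) equisingular stratum inside $B^{fix}_{C,z}$, with tangent space $\Ies_{fix}(C,z)/\Iea_{fix}(C,z)$, so smooth of dimension $\tea_{fix}(C,z)-\tes_{fix}(C,z)$. The equality with $\tea(C,z)-\tes(C,z)$ is precisely the isomorphism of forgetful morphism established in Corollary \ref{cor:Is}(3).

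Finally, (5) is a direct consequence of Proposition \ref{pr.straight.su}(1). Choosing $g_1,\dots,g_{\tau'}\in I^s(C,z)$ that induce a basis of $I^s(C,z)/\langle f,\fm_z j(f)\rangle$ and extending to a basis of $\ko_{\Sigma,z}/\langle f,\fm_z j(f)\rangle$ realizes $B^{s}_{C,z}$ as the linear subspace $\{t_{\tau'+1}=\cdots=t_{\tea_{fix}(C,z)}=0\}$ of the smooth space $B^{fix}_{C,z}$; in particular it is smooth, of dimension
\[
\dim_\C I^s(C,z)/\langle f,\fm_z j(f)\rangle \;=\; \dim_\C\ko_{\Sigma,z}/\Iea_{fix}(C,z)-\dim_\C\ko_{\Sigma,z}/I^s(C,z) \;=\; \tea_{fix}(C,z)-\tau^s(C,z).
\]

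The only genuinely deep input is Wahl's smoothness result used in (3); once (1)--(3) are in hand, the remaining cases are formal consequences of the tangent space identifications from Corollary \ref{cor:Is} and the explicit linear embedding from Proposition \ref{pr.straight.su}.
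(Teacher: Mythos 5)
Your proposal is correct and follows essentially the same route as the paper: unobstructedness for (1)--(2), the smoothness of the $\mu$-constant stratum (Wahl, via \cite{GLS07}) for (3)--(4), the linear realization from Proposition \ref{pr.straight.su} for (5), and the tangent-space identifications of Corollary \ref{cor:Is} for all the dimension counts. The only (harmless) divergence is in (4): where you assert smoothness of the sectioned equisingular stratum ``analogously'', the paper instead deduces it from the smoothness of the forgetful morphism $B^{es,fix}_{C,z}\to B^{es}_{C,z}$ together with the smoothness of $B^{es}_{C,z}$, which in addition shows that this morphism is an isomorphism.
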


Note that the definitions of $\tea$ and $\tes$ are consistent in the following sense.
$\tes(C,z)$ is the codimension in $B_{C,z}$ of the base space of the semiuniversal
{\em equisingular} deformation of $(C,z)$ (which coincides with the $\mu$-constant stratum in $B_{C,z}$), while $\tea(C,z)$ is the codimension in $B_{C,z}$ of the base space of the semiuniversal {\em equianalytic} deformation of $(C,z)$ (which is the reduced point).

\begin{proof}
For $B_{C,z}$ and $B^{fix}_{C,z}$ the smoothness and the dimension statements  are well known.
By \cite [Theorem II.2.61, Corollary II.2.67 and Proposition II.2.63]{GLS07}
we know that $B^{es}_{C,z}$
is isomorphic to the $\mu$-constant stratum in $B_{C,z}$ and smooth, while $B^{es,fix}_{C,z}$
is isomorphic to the $\mu$-constant stratum in $B^{fix}_{C,z}$. The smoothness of $B^{s}_{C,z}$ follows from 
Proposition \ref{pr.straight.su}.
The dimension statements are a consequence of the smoothness and of 
Corollary \ref{cor:Is}, since then 
the dimension coincides with the dimension of the Zariski tangent space.

To see the smoothness of $B^{es,fix}_{C,z}$ and the second equality in (4) 
note that in general deformations with section
are isomorphic to deformations
with trivial section (cf.  \cite[Proposition II.2.2]{GLS07})
and we will identify the corresponding
semiuniversal base spaces $B^{sec}_{(C,z)} = B^{fix}_{(C,z)}$.
Moreover, the forgetful morphism of functors from
deformations with section to deformations without section is smooth 
by \cite[Corollary II.1.6]{GLS07}.
It follows that the induced morphism of base spaces
$B^{es,fix}_{C,z} \to B^{es}_{C,z}$ is smooth (flat with smooth fibre).
Hence $B^{es,fix}_{C,z}$ is smooth and of the same dimension as its tangent space,  namely
$\dim_\C T^{1,es,fix}_{C,z} = \tau_{fix}(C,z) -\tes_{fix}(C,z) =  \tau(C,z) -\tes(C,z)$
by  Corollary \ref{cor:Is}. As this is the
dimension of $B^{es}_{C,z}$ by Proposition \ref{pr.straight.su}, we get that
the morphism $B^{fix,es}_{C,z} \to B^{es}_{C,z}$ is an isomorphism
(reflecting the fact that the equisingular section is unique).
\end{proof}

We state now the main result about the relation between equisingular deformations  of $(C,z)$ along the trivial section, the induced deformation of the topological singularity scheme $Z^s(C,z)$, and straight equisingular deformations of $(C,z)$.

\begin{theorem}\label{th:h0}
With the above notations there exists a surjective morphism of germs
               $$ \psi: B^{es,fix}_{C,z} \to \kh_0^s(S), $$
with  smooth fibre $\psi^{-1}(Z^s(C,z)) \cong B^{s}_{C,z}$.
Moreover, we have
 $$ 
\begin{array}{lll}
\dim \kh_0^s(S) &=& \# \{\text{free vertices } q\in\sT^*(C,z)\setminus\{z\} \}\\
&=& \dim_\C\Ies_{\text{fix}}(C,z)/I^s(C,z)\\
&=& \deg Z^s(C,z) - \tes((C,z)-2 \\
&=& \tau^s(C,z) - \tes_{fix}(C,z)\\
&=& \sum_{q\in \sT^*(C,z)} \frac{m_q(m_q\!\!\:+\!\!\;1)}{2}\, - \tes_{fix}(C,z)
\end{array}
$$
with $m_q$ the multiplicity of the strict transform of $(C,z)$ at $q$.
\end{theorem}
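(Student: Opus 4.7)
The plan is to construct $\psi$ from the universal property of the rooted Hilbert scheme, identify its fibre over $Z^s(C,z)$ with $B^{s}_{C,z}$ via the straightness characterization of Corollary~\ref{cor:Is}, and read off the dimension formulas from smoothness.

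First I would construct $\psi$. A semiuniversal es-deformation with trivial section $\kc\to B^{es,fix}_{C,z}$ of $(C,z)$ assigns to each $t$ the topological singularity scheme $Z^s(\kc_t,z)\subset\Sigma$, and these share the cluster graph $\G=\G(C,z)$ since es-families preserve the topological type. The successive blowing ups along the unique equimultiple sections through the essential tree (Definition~\ref{def:es-deformation}, Remark~\ref{rm:es-deformation}(2)) resolve the induced family of zero-dimensional schemes by trivially rooted equimultiple sections, so it defines an element of $\Hilb^{\G}_{\C\{x,y\}}(B^{es,fix}_{C,z})$; representability (Theorem~\ref{thm:h0}) then produces $\psi\colon B^{es,fix}_{C,z}\to \kh_0^s(S)$.

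Next I would identify the fibre. Choose $g_1,\dots,g_a\in I^s(C,z)$ inducing a basis of $I^s/\Iea_{fix}$ (Proposition~\ref{pr.straight.su}(1)) and extend by $g_{a+1},\dots,g_b\in \Ies_{fix}$ to a basis of $\Ies_{fix}/\Iea_{fix}$, so that the semiuniversal es-deformation with trivial section is presented as $f+\sum_i t_i g_i$ over $(\C^b,0)\cong B^{es,fix}_{C,z}$ with $B^s_{C,z}$ realized as the linear subspace $\{t_{a+1}=\cdots=t_b=0\}$ (Proposition~\ref{pr.straight.su}(1), Remark~\ref{rm:lin}(1)). By Lemma~\ref{ln1.23} together with Corollary~\ref{cor:Is}(2), a deformation in this parameter space extends $Z^s(C,z)$ trivially at first order if and only if its perturbation lies in $I^s(C,z)$, i.e.\ precisely on this straight subspace. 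The uniqueness of equimultiple infinitesimal sections for es-deformations (Remark~\ref{rm:es-deformation}(2)) allows the first-order equivalence to be propagated inductively to every Artinian extension, yielding $\psi^{-1}(Z^s(C,z))\cong B^s_{C,z}$ scheme-theoretically; this fibre is smooth by Lemma~\ref{lem:bases}(5).

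Surjectivity of $\psi$ follows from versality: any $Z\in\kh_0^s(S)$ equals $Z^s(C',z)$ for some plane curve singularity $(C',z)$ of topological type $S$ rooted at $z$, and $(C',z)$ arises as a fibre of the semiuniversal es-deformation with trivial section, whence $Z=\psi(t)$ for the corresponding $t$. Combining smoothness of $B^{es,fix}_{C,z}$ (Lemma~\ref{lem:bases}(4)) with smoothness of the fibre then yields
$$\dim \kh_0^s(S)=\dim B^{es,fix}_{C,z}-\dim B^s_{C,z}=(\tea_{fix}-\tes_{fix})-(\tea_{fix}-\tau^s)=\tau^s(C,z)-\tes_{fix}(C,z),$$
which equals $\dim_\C\Ies_{fix}(C,z)/I^s(C,z)$ by the definitions. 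Using $\tes_{fix}=\tes+2$ (Corollary~\ref{cor:Is}) and $\tau^s(C,z)=\deg Z^s(C,z)=\sum_{q\in\sT^*(C,z)}\frac{m_q(m_q+1)}{2}$ (Lemma~\ref{ln1.25}) chains the remaining equalities, while equality with the number of free vertices in $\sT^*(C,z)\setminus\{z\}$ is Theorem~\ref{thm:h0}.

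The main obstacle I anticipate is the scheme-theoretic part of the fibre identification, namely promoting the first-order equivalence between \emph{trivial extension of the induced family of topological singularity schemes} and \emph{straight equisingularity} from the tangent level to arbitrary Artinian base spaces. This will rely on combining the unique-section property of Remark~\ref{rm:es-deformation}(2) with a careful induction along the essential tree $\sT^*(C,z)$, successively reading off equimultiplicity along the trivial sections from constancy of the total-transform multiplicities after each blow up.
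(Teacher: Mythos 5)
Your construction of $\psi$ and the identification of its fibre with $B^s_{C,z}$ follow essentially the paper's route (functorial assignment $t\mapsto Z^s(\kc_t,z)$, representability, and the observation that a family in $\Hilb^{\G}_{\C\{x,y\}}$ is trivial iff all equimultiple sections are trivial, so the fibre of the functor morphism is exactly the straight es-deformations); your extra worry about Artinian bases is not how the paper argues, since $\Hilb^{\G}_{\Sigma}$ is only defined on reduced spaces and the paper simply factors any straight es-deformation through $\psi^{-1}(Z^s(C,z))$.

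The genuine gap is in the dimension count. You write $\dim \kh_0^s(S)=\dim B^{es,fix}_{C,z}-\dim B^s_{C,z}$ and justify it by ``smoothness of the source and of the fibre''\!. But for a surjective morphism of irreducible germs, smoothness of the source and of \emph{one particular} fibre only gives the inequality $\dim \psi^{-1}(Z^s(C,z))\geq \dim B^{es,fix}_{C,z}-\dim \kh_0^s(S)$; equality requires that the fibre over the distinguished point $Z^s(C,z)$ have the \emph{generic} fibre dimension, i.e.\ that $\psi$ not drop rank there. Unwinding via openness of versality, the generic fibre over a nearby $Z^s(C',z')$ has dimension $\dim B^s_{C,z}-\bigl(\tes_{fix}(C,z)-\tes_{fix}(C',z')\bigr)$, so your identity is \emph{equivalent} to the constancy of $\tes_{fix}$ along the equisingular family --- which is exactly the nontrivial content here, not a formal consequence of smoothness. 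The paper handles this by applying the theorems of Frisch (generic flatness) and Sard to compute $\dim(\kh_0^s(S),Z^s(C',z'))=\deg Z^s(C',z')-\tes_{fix}(C',z')$ at a generic nearby point, and then using the constancy of $\deg Z^s$ together with the irreducibility of $\tHilb^{\G}_{\C\{x,y\}}$ (Theorem \ref{thm:h0}) and the free-vertex formula to conclude $\tes_{fix}(C',z')=\tes_{fix}(C,z)$ and hence the remaining equalities. You should either reproduce this generic-point argument or supply an independent proof that $\tes_{fix}$ is a topological invariant. A smaller soft spot: your surjectivity argument asserts that every $(C',z)$ of topological type $S$ occurs as a fibre of the semiuniversal es-deformation of $(C,z)$, which is false globally (different analytic moduli); surjectivity is a statement of germs and rests on \cite[Proposition I.1.17]{GLS18}, which the paper cites explicitly.
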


\begin{proof}
Equisingular deformations $\sC \subset \Sigma \times T \to T$ of $(C,z)$ along the trivial section over a reduced germ $T$ are exactly those,  which are equimultiple along the trivial section $\sigma(t) = z$ and equimultiple along the 
(not necessarily trivial) sections 
$\sigma_{(q)},  q \in \sT^*(C,z)$, through the non-nodes of the reduced total transform of $(C,z)$ in a resolution of $(C,z)$. Equisingularity implies that 
the cluster graph $\G(\sC_t,z) =\G(Z^s(\sC_t,z))$ is constant and equal to 
$\G =\clg(Z^s(C,z))$. Hence $\deg Z^s(\sC_t,z)$ is constant and $\kz = \{Z^s(\sC_t,z)\}_{t \in T}$ is a flat family of zero-dimensional schemes in $\Hilb^{\G}_{\C\{x,y\}}(T)$,  a deformation of  $Z^s(C,z)$.  

Since the association $(\sC  \to T) \to (\kz \to T)$ is functorial in $T$ and respects isomorphism classes, we get a morphism of functors
$$ \uDef^{es,fix}_{C,z} \to \Hilb^{\G}_{\C\{x,y\}},$$
 where $\uDef^{es,fix}_{C,z}$ is the functor of isomorphism classes
of equisingular deformations of $(C,z)$ with trivial section.
A family in $\Hilb^{\G}_{\C\{x,y\}}$ is trivial if it is given by the
subspace $Z^s(C,z)\times T \subset \Sigma\times T$, i.e.,
iff the sections through the infinitely near points $q \in \sT^*(C,z)$ are trivial. 
Hence the fibres of the morphism of functors are exactly the straight equisingular deformations of $(C,z)$.

The morphism of functors induces a morphism of germs
              $ \psi: B^{es,fix}_{C,z} \to \kh_0^s(S), $
which is surjective (by \cite[Proposition I.1.17]{GLS18}) with fibre over $Z^s(C,z)$ being isomorphic to $B^{s}_{C,z}$, since every straight es-deformation of $(C,z)$
over $T$ can be induced via a map $T \to B^{es,fix}_{C,z}$ which must factor through $\psi^{-1}(Z^s(C,z))$.

By Theorem \ref{rm:tes} we get the first formula for the dimension of  $\kh_0^s(S)$.
For the others we use the theorems of Frisch and Sard to see that there are
points $Z^s(C',z')$ in $\kh_0^s(S)$ arbitrary close to $Z^s(C,z)$, 
over which $\psi$ is flat and hence satisfies
$$
\begin{array}{lll}
 \dim (\kh_0^s(S), Z^s(C',z')) &= &\tau(C',z') - \tes(C',z') - \dim_\C I^s(C',z')/\Iea(C',z')\\
 &=& \deg Z^s(C',z') - \tes_{fix}(C',z').
 \end{array}
 $$
 Since $\deg Z^s(C',z') = \deg Z^s(C,z)$ and since $\kh_0^s(S)$ is irreducible
 (by Theorem \ref{thm:h0}) its dimension is constant and thus we get $\tes_{fix}(C',z') = \tes_{fix}(C,z)$. Using Remark \ref{rm:tes} we get the last four dimension formulas. 
\end{proof}

\begin{remark} For any es-deformations there exists (unique) sections
$\sigma_{(q)}$ through $q\in\sT^*=\sT^*(C,z)$ along which the deformation of the reduced total transform is equimultiple. For straight es-deformations $f + tg$ over $(\C,0)$ with
$g\in I^s(f)$ these sections are all trivial. For arbitrary es-deformations
with tangent directions in $\Ies_{\text{fix}}(f)$
the sections through satellite points $q\in\sT^*$ have to stay at
satellite points. But the sections through free points
in $\sT^*\setminus\{z\}$ may move along the exceptional divisor, giving one degree of freedom for every free point. Since $\kh^s_0(S)$ represents es-deformations with trivial initial section, we get a geometric interpretation of the formula $\dim\kh^s_0(S)$ = $\#$(free vertices $q\in\sT^*\setminus\{z\}$).
\end{remark}

\bibliographystyle{amsalpha}

\noindent
Email: greuel@mathematik.uni-kl.de
\end{document}